\definecolor{shadecolor}{gray}{0.9}
\definecolor{shadecolor}{gray}{0.9}
\newcommand{\E}{\operatorname{\mathbb E}}
\DeclareMathOperator*{\argmin}{\arg\min}
\numberwithin{equation}{section}
\newtheorem{theorem}{Theorem}[section]
\newtheorem{lemma}{Lemma}[section]
\newtheorem{corollary}{Corollary}[section]
\theoremstyle{remark}
\newtheorem{remark}{Remark}[section]
\def\given{\,|\,}
\def\P{\mathbb{P}}
\def\E{\mathbb{E}}
\def\reals{\mathbb{R}}
\def\argmin{\mathop{\text{arg\,min}}}
\def\ones{\mathds{1}}
\let\hat\widehat
\let\what\widehat
\let\tilde\widetilde
\begin{document}

\setlength{\parskip}{0.5em}

\begin{frontmatter}
\title{Fair Quantile Regression}
\runtitle{Fair Quantile Regression}
%\affiliation{Department of Statistics, University of Pennsylvania}
%\affil[**]{Department of Statistics and Data Science, Yale University}
\begin{aug}
\vskip10pt
\author{\fnms{Dana} \snm{Yang}\ead[label=e1]{xiaoqian.yang@yale.edu}}
\,
\author{\fnms{John} \snm{Lafferty}\ead[label=e2]{john.lafferty@yale.edu}}
\,
\author{\fnms{David} \snm{Pollard}\ead[label=e3]{david.pollard@yale.edu}}
\vskip10pt
\address{
\begin{tabular}{c}
Department of Statistics and Data Science\\
Yale University
\end{tabular}
\\[10pt]
\today\\[5pt]
\vskip10pt
}
\end{aug}

\begin{abstract}
Quantile regression is a tool for learning conditional
distributions. In this paper we study quantile regression in the
setting where a protected attribute is unavailable when fitting the
model. This can lead to ``unfair'' quantile estimators for which the
effective quantiles are very different for the subpopulations defined
by the protected attribute.  We propose a procedure for adjusting the estimator on a
heldout sample where the protected attribute is available. The main result
of the paper is an empirical process analysis showing
that the adjustment leads to a fair estimator for which the target
quantiles are brought into balance, in a statistical sense that we
call $\sqrt{n}$-fairness. We illustrate the ideas and adjustment
procedure on a dataset of $200{,}000$ live births, where the objective
is to characterize the dependence of the birth weights of the babies
on demographic attributes of the birth mother; the protected attribute
is the mother's race.
\end{abstract}
\end{frontmatter}

% !TEX root = main.tex

\section{Introduction}

Recent research on fairness has formulated interesting new perspectives on
machine learning methodologies and their deployment, through work
on definitions, axiomatic characterizations, case studies, and
algorithms
\citep{hardt2016equality,dwork12,kleinberg2016inherent,chouldechova2017fair,woodworth2017learning}.

Much of the work on fairness in machine learning has
been focused on classification, although the influential paper
of \cite{hardt2016equality} considers general frameworks that include regression.
Just as the mean gives a coarse summary of
a distribution, the regression curve gives a rough summary
of a family of conditional distributions \citep{MostellerTukey77}.
Quantile regression targets a more complete
understanding of the dependence between a response variable and a
collection of explanatory variables.

Given a conditional distribution $F_X(y) = \P(Y\leq y \given X)$,
the quantile function $q_\tau(X)$ is characterized by $F_X(q_\tau(X))
= \tau$, or $q_\tau(X) = F_X^{-1}(\tau) = \inf \{y: F_X(y) \geq \tau\}$.
We consider the setting where an estimate $\what q_\tau(X)$ is formed
using a training set $\{(X_i, Y_i)\}$ for which a protected
attribute $A$ is unavailable. The estimate $\what q_\tau(X)$ will often give
quantiles that are far from $\tau$, when conditioned on
the protected variable. We study methods that adjust the estimator using a heldout
sample for which the protected attribute $A$ is observed.

As example, to be developed at length below, consider forecasting
the birth weight of a baby as a function of
the demographics and personal history of the birth mother, including
her prenatal care, smoking history, and educational background. As
will be seen, when the race of mother is excluded, the quantile
function may be very inaccurate, particularly at the lower quantiles
$\tau < 0.2$ corresponding to low birth weights. If used
as a basis for medical advice, such inaccurate forecasts could conceivably have health consequences for the
mother and infant. It would be important to adjust the estimates if the race of the mother became available.

In this paper we study the simple procedure that adjusts an initial estimate
$\what q_\tau(X)$ by adding $\what \mu_\tau A + \hat\nu_\tau$, by carrying out a quantile regression of $Y-\what q_\tau(X)$ onto $A$.
We show that this leads to an estimate
$\tilde{q}_\tau(X,A)  = \what q_\tau(X) + \what \mu_\tau A + \hat\nu_\tau$ for which the conditional quantiles are
close to the target level $\tau$ for both subpopulations $A=1$ and $A=0$. This result follows
from an empirical process analysis that exploits the special dual structure of quantile regression as a linear program.
The main technical result of our paper is that our adjustment procedure is
$\sqrt{n}$-fair at the population level. Roughly speaking, this means
that the effective quantiles for the two subpopulations agree, up to a
stochastic error that decays at a parametric $1/\sqrt{n}$ rate. We establish this result using empirical process techniques that generalize to more general types of attributes, not just binary.

In the following section we provide
technical background on quantile regression, including its formulation
in terms of linear programming, the dual program, and methods for
inference.  We also provide background on notions of fairness that are related to this work and give our definition of fairness. In Section~\ref{sec:results} we formally state the methods and results.
  The key steps in the proof are given in Section~\ref{sec:proof}. We illustrate these results on synthetic data and birth weight data in Section \ref{sec:experiments}.
We finish with a discussion of the results and possible directions for future work.
Full proofs of the technical results are provided in Section~\ref{sec:pop}

% !TEX root = main.tex

\vskip20pt

\section{Background}
\label{sec:background}

In this section we review the essentials of quantile
regression that will be relevant to our analysis. We also
briefly discuss definitions of fairness.

\subsection{Linear programming formulation}

The formulation of quantile estimates as solutions to linear programs starts with the ``check'' or ``hockey stick'' function $\rho_\tau(u)$ defined by $\rho_\tau(u)= (\tau-1) u\mathds{1}\{u\leq 0\}+ \tau u\mathds{1}\{u>0\}$.

For the median, $\rho_{1/2}(u) = \frac{1}{2} |u|$. If $Y\sim F$ is a random variable,
define $\hat\alpha(\tau)$ as the solution to the optimization $\hat\alpha(\tau) = \argmin_a \E \rho_\tau(Y-a)$. Then the stationary condition is seen to be
\begin{equation*}
    0 = (\tau-1) \int_{-\infty}^\alpha dF(u) + \tau \int_\alpha^\infty dF(u) = (\tau-1) F(\alpha) + \tau (1-F(\alpha)),
\end{equation*}
from which we conclude $\hat\alpha(\tau) = F^{-1}(\tau)$ is the $\tau$-quantile of $F$. Similarly the conditional quantile of $Y$ given random variable $X\in\mathbb{R}^p$ can be written as the solution to the optimization
$
    q_\tau(x) = \argmin_q \E\left( \rho_\tau(Y - q) \given X=x\right).
$
For a linear estimator $q_\tau(X) = X^T \hat{\beta}_\tau$, minimizing the
empirical check function loss leads to a convex optimization $\hat{\beta}_\tau=\argmin_{\beta} \sum_{i\leq n}\rho_\tau (Y_i-X_i^T\beta)$. Dividing the residual $Y_i-X_i^T\beta$ into positive part $u_i$ and negative part $v_i$ yields the linear program
\[
    \min_{u,v\in\mathbb{R}^n, \beta\in\mathbb{R}^p} \;  \tau \ones^T u + (1-\tau)\ones^T v,\;\;\;
    \text{such that} \;  Y = X\beta + u - v ,\;
                         u \geq 0, \; v\geq 0.
\]
The dual linear program is then formulated as
\begin{equation}
    \label{eq:dual}
    \max_{b} \;  Y^T b \;\;\;
    \text{such that} \;  X^T b = (1-\tau) X^T \ones,\;
    b \in [0,1]^n.
\end{equation}
When $n>p$, the primal
solution is obtained from a set of $p$ observations $X_h\in \reals^{p\times p}$ for
which the residuals are exactly zero, through
the correspondence
$
\hat\beta_\tau = X_h^{-1} Y_h.
$
The dual variables $\hat b_\tau\in [0,1]^n$, also known as regression rank scores, play the role of ranks. In particular, the quantity $\int_0^1 \hat b_{\tau,i} d\tau$ can be interpreted as the quantile
at which $Y_i$ lies for the conditional distribution of $Y$ given
$X_i$ \citep{gutenbrunner1992regression}. As seen below,
the stochastic process $\hat b_\tau$ plays an important role in fairness and inference for quantile regression.

\subsection{Notions of fairness}
\label{sec:fair.def}

\cite{hardt2016equality} introduce the notion of
{\it equalized odds} to assess fairness of
classifiers. Suppose a classifier $\widehat{Y}$ serves to estimate
some unobserved binary outcome variable $Y$. Then the estimator is
said to satisfy the equalized odds property with respect to a
protected attribute $A$ if
\begin{equation}\label{eo.def}
\widehat{Y}\Perp A \given Y.
\end{equation}
This fairness property requires that the true positive
rates~$\mathbb{P}\{\widehat{Y}=1 \given Y=1, A\}$ and the false
positive rates $\mathbb{P}\{\widehat{Y}=1 \given Y=0, A\}$ are
constant functions of~$A$. In other words,
$\widehat{Y}$ has the same proportion of type-I and type-II errors
across the subpopulations determined by the different values of $A$.

This could be extended to a related notion of fairness for quantile
regression estimators. Denote the
true conditional quantiles for outcome $Y$ given attributes $X$ as
$q_\tau(X)$. Analogous to the definition of equalized odds in~\eqref{eo.def}, we
would call a quantile estimator $\widehat{q}_\tau(X)$ fair if
\begin{equation}
\mathds{1} \left\{Y>\widehat{q}_\tau(X)\right\}\Perp A \given \mathds{1}\left\{Y>q_\tau(X)\right\}.
\end{equation}
Conditioned on the event  $\left\{Y
\leq q_\tau(X)\right\}$, we say that
$\left\{Y > \widehat{q}_\tau(X)\right\}$ is a false positive.
Conditioned on the complementary event $\left\{Y>q_\tau(X)\right\}$, we
say that $\left\{Y \leq \widehat{q}_\tau(X)\right\}$ is a false
negative. Thus, an estimator is fair if the false positive and false
negative rates do not depend on the protected attribute $A$.

The notion of fairness that we focus on in this paper is a natural
one. Considering binary $A$, we ask if the average quantiles conditional on the protected
attribute agree for $A=0$ and $A=1$. More precisely, define the effective quantiles as
\begin{equation}
\label{eq:effective.quantile}
\hat{\tau}_a=\mathbb{P}\left\{Y\leq \hat{q}_\tau(X)\given A=a\right\},\;\;\; a = 0,1.
\end{equation}
We say that the estimator $\hat q_\tau$ is fair if $\hat{\tau}_0=\hat{\tau}_1$. 
Typically when $\hat{q}_\tau$ is trained on a sample of size $n$, exact equality is too strong to ask for.
If the estimators are accurate, each of the effective quantiles should be approximately
$\tau$, up to stochastic error that decays at rate $1/\sqrt{n}$.
We say $\hat{q}_\tau$ is $\sqrt{n}$-fair if $\hat{\tau}_0=\hat{\tau}_1+O_p(1/\sqrt{n})$.
As shall be seen,
this fairness property follows from the linear programming
formulation when $A$ is included in the regression. As seen from the
birth weight example in Section~\ref{sec:birth},
if $A$ is not available at training time, the quantiles can be
severely under- or over-estimated for a subpopulation.
This formulation of fairness is closely related
to calibration by group, and demographic parity
\citep{kleinberg2016inherent,hardt2016equality,chouldechova2017fair}.
An advantage of this fairness definition is that it can be
evaluated empirically, and does not require a correctly specified model.

% !TEX root = main.tex

\section{Method and Results}
\label{sec:results}

With samples $(A_i,X_i,Y_i)$ drawn i.i.d. from some joint distribution $F$ on $\mathbb{R}\times \mathbb{R}^{p}\times \mathbb{R}$, consider the problem of estimating the conditional quantile $q_\tau(y \given a,x)$. Let $E_F$ denote the expected value operator under $F$, or $E_F f=\int f(a,x,y)\,dF(a,x,y)$. Similarly define the probability operator under $F$ as $P_F$.

Evaluate the level of fairness of an estimator $\hat{q}_\tau(a,x)$ with
\[
\text{Cov}_F\left(a,\mathds{1}\left\{y> \hat{q}_\tau(a,x)\right\}\right)= E_F(a-E_F a)\left(\mathds{1}\left\{y> \hat{q}_\tau(a,x)\right\}-P_F\left\{y> \hat{q}_\tau(a,x)\right\}\right).
\]
An estimator with a smaller $|\text{Cov}_F(a,\mathds{1}\{y> \hat{q}_\tau\})|$ is considered more fair. This measurement of fairness generalizes the notion of balanced effective quantiles described in section~\ref{sec:fair.def}. Note that when the protected attribute is binary, $\text{Cov}_F(a,\mathds{1}\{y> \hat{q}_\tau\})=0$ is equivalent to $\hat{\tau}_0=\hat{\tau}_1$ for $\hat{\tau}$ defined in~\eqref{eq:effective.quantile}.

From an initial estimator $\hat{q}_\tau$ that is potentially unfair, we propose the following correction procedure.
\vskip10pt
\framebox{
\parbox{\dimexpr\linewidth-10\fboxsep-2\fboxrule}
{\itshape%
On a training set of size $n$, compute $R_i=Y_i-\hat{q}_\tau(A_i,X_i)$ and run quantile regression of $R$ on $A$ at level $\tau$. Obtain regression slope $\hat{\mu}_\tau$ and intercept $\hat{\nu}_\tau$. Define correction $\tilde{q}_\tau(a,x)=\hat{q}_\tau(a,x)+\hat{\mu}_\tau a+\hat{\nu}_\tau$.
}}
\vskip10pt

We show that this estimator $\tilde{q}_\tau$ will
satisfy the following:
\begin{enumerate}
\item Faithful: $P_F\{y>\tilde{q}_\tau\}\approx 1-\tau$;% Is faithful the right terminology?
\item Fair: $\text{Cov}_F(a,\mathds{1}\{y>\tilde{q}_\tau\})\approx 0$;
\item Reduced risk: It almost always improves the fit of $\hat{q}_\tau$.
\end{enumerate}
Theorem~\ref{thm:fair} and Theorem~\ref{thm:risk} contain the precise statements of our claims.

\begin{theorem}[Faithfulness and fairness]\label{thm:fair}
Suppose $(A_i,X_i,Y_i)\stackrel{i.i.d.}{\sim} F$, and $A_i-\mathbb{E}A_i$ has finite second moment. Then the corrected estimator $\tilde{q}_\tau$ satisfies
\begin{align}
\label{eq:faithful.rate}
\sup_\tau\bigl|P_F\{y>\tilde{q}_\tau(a,x)\}-(1-\tau)\bigr| = & O_p\left(1/\sqrt{n}\right),\;\;\;\;\;\text{ and }\\
\label{eq:fair.rate}
\sup_\tau\bigl|\text{Cov}_F\left(a,\mathds{1}\{y>\tilde{q}_\tau(a,x)\}\right)\bigr|= & O_p\left(1/\sqrt{n}\right).
\end{align}
Furthermore, there exist positive constants $C,C_1,C_2$ such that $\forall t>C_1/\sqrt{n}$,
\begin{equation}\label{eq:faithful}
\mathbb{P}\left\{\sup_\tau\bigl|P_F\{y>\tilde{q}_\tau(a,x)\}-(1-\tau)\bigr|>t+p/n\right\}\leq C\exp\left(-C_2 nt^2\right).
\end{equation}
Under the stronger assumption that the distribution of $A_i-\mathbb{E}A_i$ is sub-Gaussian, there exist positive constants $C,C_1,C_2,C_3,C_4$ such that $\forall t>C_1/\sqrt{n}$,
\begin{equation}\label{eq:fair}
\mathbb{P}\left\{\sup_\tau\bigl|\text{Cov}_F\left(a,\mathds{1}\{y>\tilde{q}_\tau(a,x)\}\right)\bigr|>t\right\}\leq
C\left(\exp\left(-C_2 nt^2\right)+\exp\left(-C_3 \sqrt{n}\right)+n\exp\left(-C_4 n^2t^2\right)\right).
\end{equation}
\end{theorem}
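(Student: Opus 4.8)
The plan is to split the argument into a deterministic linear-programming part and a stochastic empirical-process part, working conditionally on the initial estimator (e.g.\ trained on a separate sample in which $A$ is unavailable), so that $\tau\mapsto\hat q_\tau$ may be treated as a fixed function, monotone in $\tau$.

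First I would exploit the dual of the correction regression. Quantile regression of $R$ on $(1,A)$ at level $\tau$ produces, through \eqref{eq:dual}, dual variables $\hat b_\tau\in[0,1]^n$ satisfying $\sum_{i\le n}\hat b_{\tau,i}=(1-\tau)n$ and $\sum_{i\le n}\hat b_{\tau,i}A_i=(1-\tau)\sum_{i\le n}A_i$, while complementary slackness forces $\hat b_{\tau,i}=\mathds 1\{R_i-\hat\mu_\tau A_i-\hat\nu_\tau>0\}=\mathds 1\{Y_i>\tilde q_\tau(A_i,X_i)\}$ for all but at most $p$ indices (the residuals sitting at a vertex of the program). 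Writing $P_n$ for the empirical measure and $\bar A_n=P_n a$, this already yields, uniformly in $\tau$,
\[
\bigl|P_n\mathds 1\{y>\tilde q_\tau\}-(1-\tau)\bigr|\le \tfrac pn,\qquad \bigl|P_n[(a-\bar A_n)\mathds 1\{y>\tilde q_\tau\}]\bigr|\le \tfrac pn\bigl(\textstyle\max_{i\le n}|A_i|+|\bar A_n|\bigr),
\]
so both targets are controlled at the empirical level and it remains to replace $P_n$ by $P_F$.

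Second I would invoke a uniform empirical-process bound over the class $\mathcal F=\{(a,x,y)\mapsto\mathds 1\{y>\hat q_\tau(a,x)+\mu a+\nu\}:\tau\in(0,1),\ \mu,\nu\in\reals\}$. For each fixed $\tau$ the family $\{\mathds 1\{y>\hat q_\tau(a,x)+\mu a+\nu\}:\mu,\nu\in\reals\}$ has VC dimension at most three, and the monotone dependence on $\tau$ multiplies the shatter coefficients by only a polynomial factor, so $\mathcal F$ has polynomial discrimination (in the linear case $\hat q_\tau(a,x)=x^T\hat\beta_\tau$ it is literally a family of affine half-spaces). Since $\mathcal F$ is uniformly bounded, the standard VC concentration inequality gives $\mathbb P\{\sup_{\mathcal F}|(P_F-P_n)f|>t\}\le C\exp(-C_2nt^2)$ for $t\gtrsim1/\sqrt n$; combining this with the first display (and absorbing $|\bar A_n|=O_p(1)$) yields \eqref{eq:faithful}, and integrating the tail yields \eqref{eq:faithful.rate}.

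Third, and this is the main obstacle, for fairness the relevant class is $\{af:f\in\mathcal F\}$, whose envelope $|a|$ is unbounded. I would decompose $\text{Cov}_F(a,\mathds 1\{y>\tilde q_\tau\})-P_n[(a-\bar A_n)\mathds 1\{y>\tilde q_\tau\}]$ into (i) the empirical process of $\{af:f\in\mathcal F\}$ and (ii) lower-order terms $(\bar A_n-\E a)P_Ff_\tau$ and $\bar A_n(P_F-P_n)f_\tau$, the latter handled by sub-Gaussian concentration of $\bar A_n$ together with the bounded VC bound above. For (i), under the sub-Gaussian assumption I would truncate $a$ at level of order $n^{1/4}$ (or invoke a Talagrand/Adamczak-type inequality for classes with sub-Gaussian envelope): the bounded part has weak variance at most $\E a^2=O(1)$, contributing the $\exp(-C_2nt^2)$ term, while the envelope's sub-Gaussian tail contributes the $\exp(-C_3\sqrt n)$ term. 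Finally the complementary-slackness remainder $\tfrac pn\max_{i\le n}|A_i|$ from the first display exceeds $t$ only with probability at most $n\,\mathbb P\{|A_1|>tn/p\}\le n\exp(-C_4n^2t^2)$, producing the last term of \eqref{eq:fair}. For the $O_p(1/\sqrt n)$ rate \eqref{eq:fair.rate} only a finite second moment of $a-\E a$ is needed: then $\{af:f\in\mathcal F\}$ is Donsker with square-integrable envelope, so $\sup_\tau|(P_F-P_n)(af_\tau)|=O_p(1/\sqrt n)$, and $\tfrac pn\max_{i\le n}|A_i|=o_p(1/\sqrt n)$ exactly because $\E a^2<\infty$. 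The delicate point is item (i): isolating one clean sub-Gaussian exponent while peeling off the two genuinely lower-order tails is what forces the two-tier hypothesis — finite variance for the rates, sub-Gaussian for the exponential bounds — and requires the truncation level and the concentration inequality to be balanced against each other.
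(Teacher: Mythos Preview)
Your proposal is correct and follows the same two-stage architecture as the paper: first use the dual of the correction quantile regression (complementary slackness plus the two linear constraints $\sum_i\hat b_{\tau,i}=(1-\tau)n$ and $\sum_i\hat b_{\tau,i}A_i=(1-\tau)\sum_iA_i$) to obtain near-exact faithfulness and fairness at the empirical level, with slack $p/n$ and $\tfrac{p}{n}\max_i|A_i-\mathbb{E}A_i|$ respectively; then pass from $P_n$ to $P_F$ by a uniform empirical-process bound over the indicator class. The identification of the three tail terms in \eqref{eq:fair} --- $\exp(-C_2nt^2)$ from the empirical process, $\exp(-C_3\sqrt n)$ from the envelope's tail, and $n\exp(-C_4n^2t^2)$ from $\tfrac{p}{n}\max_i|A_i-\mathbb{E}A_i|$ via a union bound over the sub-Gaussian $A_i$ --- matches the paper exactly.

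The execution of the empirical-process step differs. The paper packages it into a single lemma (Lemma~\ref{lmm:emp}): for a VC-subgraph class with square-integrable envelope $F$, symmetrization plus chaining yields a tail bound in terms of the \emph{random} quantity $\|F\|_n=\bigl(n^{-1}\sum_iF(X_i)^2\bigr)^{1/2}$. Applied with $F(a,r)=|a-\mathbb{E}a|$, the $\exp(-C_3\sqrt n)$ term then comes directly from the sub-exponential concentration of $\|F\|_n^2=n^{-1}\sum_i(A_i-\mathbb{E}A_i)^2$, with no truncation level to choose. Your route --- truncate $a$ at $n^{1/4}$, apply Talagrand to the bounded part, and sub-Gaussian tails to the remainder (or invoke Adamczak directly) --- reaches the same destination but is more piecemeal. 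The paper also works directly with the centered class $\{(a-\mathbb{E}a)\mathds{1}\{r>\mu a+\nu\}\}$ rather than decomposing $\text{Cov}_F$ into the $af$ process plus the two $\bar A_n$ cross-terms you list in (ii); this avoids having to handle those cross-terms separately. Finally, the paper indexes its VC class only by $(\mu,\nu)\in\mathbb{Q}^2$, treating $(A_i,R_i)$ as the data; your explicit inclusion of $\tau$ in the index set (via monotonicity of $\hat q_\tau$ or the affine structure in the linear case) is an extra layer the paper does not introduce.
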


The following corollary for binary protected attributes is an easy consequence of~\eqref{eq:faithful.rate} and~\eqref{eq:fair.rate}.
\begin{corollary}
\label{cor:fair}
If $A$ is binary, then the correction procedure gives balanced effective quantiles:
\[
\hat{\tau}_0=\tau+O_p(1/\sqrt{n}),\;\;\; \hat{\tau}_1=\tau+O_p(1/\sqrt{n}).
\]
\end{corollary}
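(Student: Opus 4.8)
The plan is to reduce the two scalar guarantees of Theorem~\ref{thm:fair} to a $2\times 2$ linear system in the unknowns $\hat\tau_0,\hat\tau_1$ and solve it. Write $\pi = P_F\{A=1\}$, and assume $\pi\in(0,1)$ (a degenerate binary attribute makes one of the effective quantiles undefined). Here $\hat\tau_a$ denotes the effective quantile \eqref{eq:effective.quantile} of the \emph{corrected} estimator $\tilde q_\tau$, so that $1-\hat\tau_a = P_F\{y>\tilde q_\tau(a,x)\mid A=a\}$ for $a=0,1$.

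First I would express the two population functionals appearing in Theorem~\ref{thm:fair} in terms of $\hat\tau_0$ and $\hat\tau_1$. The law of total probability gives
\[
P_F\{y>\tilde q_\tau(a,x)\} = (1-\pi)(1-\hat\tau_0) + \pi(1-\hat\tau_1),
\]
while, using $E_F[A\,\mathds{1}\{y>\tilde q_\tau\}] = \pi(1-\hat\tau_1)$,
\[
\text{Cov}_F\left(a,\mathds{1}\{y>\tilde q_\tau(a,x)\}\right) = \pi(1-\hat\tau_1) - \pi\bigl[(1-\pi)(1-\hat\tau_0)+\pi(1-\hat\tau_1)\bigr] = \pi(1-\pi)\,(\hat\tau_0-\hat\tau_1).
\]
This last identity is the bridge between \emph{small covariance} and \emph{balanced effective quantiles} in the binary case.

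Next I would solve the system. Since $\pi(1-\pi)$ is a fixed positive constant, \eqref{eq:fair.rate} together with the displayed identity yields $\hat\tau_0-\hat\tau_1 = O_p(1/\sqrt n)$. Substituting $1-\hat\tau_1 = (1-\hat\tau_0) + (\hat\tau_0-\hat\tau_1)$ into the first display and invoking \eqref{eq:faithful.rate} gives $(1-\hat\tau_0) + \pi(\hat\tau_0-\hat\tau_1) = (1-\tau) + O_p(1/\sqrt n)$, hence $1-\hat\tau_0 = (1-\tau) + O_p(1/\sqrt n)$, i.e. $\hat\tau_0 = \tau + O_p(1/\sqrt n)$; and then $\hat\tau_1 = \hat\tau_0 - (\hat\tau_0-\hat\tau_1) = \tau + O_p(1/\sqrt n)$ as well. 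If the conclusion is wanted uniformly in $\tau$, the same argument applies verbatim after taking $\sup_\tau$ and using the uniform bounds \eqref{eq:faithful.rate}--\eqref{eq:fair.rate}.

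I do not expect a genuine obstacle: the corollary is a direct algebraic consequence of Theorem~\ref{thm:fair}. The only points that need care are recording the exact collapse of the covariance to $\pi(1-\pi)(\hat\tau_0-\hat\tau_1)$ for binary $A$, and noting that dividing out the fixed positive factor $\pi(1-\pi)$ preserves the $O_p(1/\sqrt n)$ rate.
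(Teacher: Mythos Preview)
Your proposal is correct and is exactly the argument the paper has in mind: the paper does not spell out a proof but simply states that the corollary is an easy consequence of \eqref{eq:faithful.rate} and \eqref{eq:fair.rate}, having already noted (just before the statement of Theorem~\ref{thm:fair}) that for binary $A$ the covariance condition $\text{Cov}_F(a,\mathds{1}\{y>\hat q_\tau\})=0$ is equivalent to $\hat\tau_0=\hat\tau_1$. Your computation makes this quantitative via the identity $\text{Cov}_F(a,\mathds{1}\{y>\tilde q_\tau\})=\pi(1-\pi)(\hat\tau_0-\hat\tau_1)$ and then solves the resulting linear system, which is precisely the intended route.
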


\begin{remark}
By modifying the proof of Theorem~\ref{thm:fair} slightly, Corollary~\ref{cor:fair} can be extended to the case where $A$ is categorical with $K$ categories. In this case the correction procedure needs to be adjusted accordingly. Instead of regressing $R$ on $A$, regress $R$ on the span of the indicators $\{A=k\}$ for $k=1,..., K-1$, leaving one category out to avoid collinearity. The corrected estimators will satisfy $\hat{\tau}_k=\tau+O_p(1/\sqrt{n})$ for all categories $k=1,..., K$.
\end{remark}

Define $\mathcal{R}(\cdot)=E_F\rho_\tau (y-\cdot)$ as the risk function, where $\rho_\tau(u)=\tau u\mathds{1}\{u>0\}+(1-\tau) u\mathds{1}\{u\leq 0\}$.
\begin{theorem}[Risk quantification]\label{thm:risk}
The adjustment procedure $\tilde{q}_\tau(A,X)$ satisfies
\[
\mathcal{R}(\tilde{q}_\tau)\leq \inf_{\mu,\nu\in\mathbb{R}}\mathcal{R}(\hat{q}_\tau+\mu A + \nu) +O_p(1/\sqrt{n}).
\]
\end{theorem}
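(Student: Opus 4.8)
Because the adjustment is performed on a heldout sample that is independent of the data used to fit $\hat q_\tau$ (as in the birth-weight application), I would argue conditionally on $\hat q_\tau$ and integrate out at the end. Write $R=Y-\hat q_\tau(A,X)$, let $\mathbb{P}_n g=\tfrac1n\sum_{i\le n}g(A_i,X_i,Y_i)$ denote averaging over the $n$ correction points, and abbreviate
\[
\mathcal{R}(\mu,\nu):=\mathcal{R}(\hat q_\tau+\mu A+\nu)=E_F\rho_\tau\bigl(y-\hat q_\tau(a,x)-\mu a-\nu\bigr),\qquad \mathcal{R}_n(\mu,\nu):=\mathbb{P}_n\rho_\tau\bigl(R-\mu A-\nu\bigr).
\]
By construction $(\hat\mu_\tau,\hat\nu_\tau)=\argmin_{\mu,\nu}\mathcal{R}_n(\mu,\nu)$: the adjustment is simply a two-parameter quantile regression of $R$ on the design $(A,1)$. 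Using $\rho_\tau(u)\ge\min(\tau,1-\tau)|u|$ together with non-degeneracy of $A$ (finite second moment and not a.s. constant) and integrability of $R$, the population objective $\mathcal{R}(\cdot,\cdot)$ is convex and coercive on $\mathbb{R}^2$, hence attains its minimum at some $(\mu^{\ast},\nu^{\ast})$; the goal is to show $\mathcal{R}(\hat\mu_\tau,\hat\nu_\tau)-\mathcal{R}(\mu^{\ast},\nu^{\ast})=O_p(1/\sqrt n)$, which is exactly the asserted inequality since $\mathcal{R}(\tilde q_\tau)=\mathcal{R}(\hat\mu_\tau,\hat\nu_\tau)$ and $\inf_{\mu,\nu}\mathcal{R}(\hat q_\tau+\mu A+\nu)=\mathcal{R}(\mu^{\ast},\nu^{\ast})$.

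The plan is the standard $M$-estimation basic inequality. Introduce the centred family $g_{\mu,\nu}(a,x,y)=\rho_\tau(y-\hat q_\tau-\mu a-\nu)-\rho_\tau(y-\hat q_\tau)$, so that differences of risks (empirical or population) are $\mathbb{P}_n g$ or $E_F g$ of these bounded-difference, everywhere-integrable functions, the uncentred $\rho_\tau$ pieces cancelling. Adding and subtracting $\mathbb{P}_n$ and using $\mathcal{R}_n(\hat\mu_\tau,\hat\nu_\tau)\le\mathcal{R}_n(\mu^{\ast},\nu^{\ast})$ gives, for any compact $K\subset\mathbb{R}^2$ containing both $(\hat\mu_\tau,\hat\nu_\tau)$ and $(\mu^{\ast},\nu^{\ast})$,
\[
\mathcal{R}(\hat\mu_\tau,\hat\nu_\tau)-\mathcal{R}(\mu^{\ast},\nu^{\ast})\ \le\ (E_F-\mathbb{P}_n)g_{\hat\mu_\tau,\hat\nu_\tau}-(E_F-\mathbb{P}_n)g_{\mu^{\ast},\nu^{\ast}}\ \le\ 2\sup_{(\mu,\nu)\in K}\bigl|(E_F-\mathbb{P}_n)g_{\mu,\nu}\bigr|.
\]
It then remains to (i) confine $(\hat\mu_\tau,\hat\nu_\tau)$ to a compact set with high probability, and (ii) bound the empirical-process fluctuation on a fixed compact at rate $1/\sqrt n$.

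For (i) I would use convexity plus coercivity: $\mathcal{R}_n(\hat\mu_\tau,\hat\nu_\tau)\le\mathcal{R}_n(0,0)=\mathbb{P}_n\rho_\tau(R)\to\mathcal{R}(0,0)<\infty$, whereas $\inf_{\|(\mu,\nu)\|=M}\mathcal{R}(\mu,\nu)\to\infty$ as $M\to\infty$; choosing $M$ so that the latter exceeds $\mathcal{R}(0,0)+1$ and using uniform convergence of $\mathcal{R}_n$ to $\mathcal{R}$ on $\{\|(\mu,\nu)\|\le M\}$ (a special case of (ii)) forces the convex map $\mathcal{R}_n$ to put its minimiser inside radius $M$ with probability tending to one, so $(\hat\mu_\tau,\hat\nu_\tau)=O_p(1)$ and, with the fixed point $(\mu^{\ast},\nu^{\ast})$, lies in a compact $K_\epsilon$ of probability at least $1-\epsilon$. (Alternatively one can exploit the dual LP structure of Section~\ref{sec:background}: $(\hat\mu_\tau,\hat\nu_\tau)=X_h^{-1}R_h$ is a basic solution pinned by two of the correction points, hence bounded once the $2\times2$ design is well conditioned; the coercivity route avoids the conditioning-number bookkeeping.) For (ii), on a fixed compact $K$ the class $\mathcal{G}_K=\{g_{\mu,\nu}:(\mu,\nu)\in K\}$ is Lipschitz-indexed by a two-dimensional parameter, $|g_{\mu,\nu}-g_{\mu',\nu'}|\le|\mu-\mu'|\,|a|+|\nu-\nu'|$ since $\rho_\tau$ is $1$-Lipschitz, so it has an $L^2(F)$-envelope $G\le C_K(|a|+1)$ — this is where the finite second moment of $A-\E A$ enters — and bracketing numbers $N_{[\,]}\bigl(u\|G\|_{L^2(F)},\mathcal{G}_K,L^2(F)\bigr)\lesssim u^{-2}$, hence a finite bracketing entropy integral. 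The maximal inequality for empirical processes then yields
\[
\sqrt n\,\mathbb{E}\Bigl[\sup_{(\mu,\nu)\in K}\bigl|(E_F-\mathbb{P}_n)g_{\mu,\nu}\bigr|\Bigr]\ \lesssim\ \|G\|_{L^2(F)}\int_0^1\sqrt{1+\log N_{[\,]}\bigl(u\|G\|_{L^2(F)},\mathcal{G}_K,L^2(F)\bigr)}\,du\ <\ \infty,
\]
and Markov's inequality turns this into the $O_p(1/\sqrt n)$ bound. Combining with the displayed inequality on the event $\{(\hat\mu_\tau,\hat\nu_\tau)\in K_\epsilon\}$ and sending $\epsilon\downarrow0$ proves the conditional statement; since the constants appearing above are themselves $O_p(1)$ functionals of $\hat q_\tau$ (through $E_F|R|$ and the induced $(\mu^{\ast},\nu^{\ast})$), integrating over $\hat q_\tau$ gives the unconditional claim.

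The main obstacle is step (i). Without a density assumption on $Y$ the risk $\mathcal{R}$ has no quadratic curvature at its minimum, so one cannot localise through a sharp quadratic lower bound and must instead lean on global convexity together with non-degeneracy of $A$ and integrability of the residuals to keep $(\hat\mu_\tau,\hat\nu_\tau)$ from drifting; arranging this confinement uniformly enough to un-condition cleanly on $\hat q_\tau$ is the delicate point. (Were one willing to assume a positive conditional density of $Y$ near the quantile, the same argument with a local quadratic lower bound would upgrade the rate to $O_p(1/n)$, so the $1/\sqrt n$ statement is the assumption-light version.)
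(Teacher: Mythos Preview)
Your proposal is correct and follows essentially the same route as the paper: localize $(\hat\mu_\tau,\hat\nu_\tau)$ to a compact set via convexity/coercivity, establish uniform $O_p(1/\sqrt n)$ concentration of the empirical risk over that set, and conclude via the basic inequality. The only notable difference is the concentration tool---you use bracketing entropy from the Lipschitz parametrization of $\rho_\tau$, whereas the paper invokes its Lemma~\ref{lmm:emp} by checking that $\text{Subgraph}\{\rho_\tau(r-\mu a-\nu):\mu,\nu\in\mathbb{Q}\}$ is a VC class with square-integrable envelope $|r|+K|a|+K$; both are standard devices yielding the same $1/\sqrt n$ rate, and your explicit handling of coercivity and of the conditioning on $\hat q_\tau$ is if anything more careful than the paper's.
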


We note that in the different setting where $A$ is a treatment rather than an observational variable,
it is of interest to obtain an unbiased estimate of the treatment effect $\mu_\tau$.
In this case a simple alternative approach is the so-called ``double machine learning'' procedure
by \cite{chernozhukov2016double}; in the quantile regression setting
this would regress the residual onto the transformed attribute $A - \what A$ where
$\what A = \what A(X)$ is a predictive model
of $A$ in terms of $X$.

% !TEX root = main.tex

\section{Fairness on the training set}
\label{sec:training}

When a set of regression coefficients $\hat{\beta}_\tau$ is obtained by running quantile regression of $Y\in\mathbb{R}^n$ on a design matrix $X\in\mathbb{R}^{n\times p}$, the estimated conditional quantiles on the training set $\hat{q}_\tau(X)=X^T\hat{\beta}_\tau$ are always ``fair" with respect to any binary covariate that enters the regression. Namely, if a binary attribute $X_j$ is included in the quantile regression, then no matter what other attributes are regressed upon, on the training set the outcome $Y$ will lie above the estimated conditional quantile for approximately a proportion $1-\tau$, for each of the two subpopulations $X_{j}=0$ and $X_j=1$. This phenomenon naturally arises from the mathematics behind quantile regression. This section explains this property, and lays some groundwork for the out-of-training-set analysis of the following section.

We claim that for any binary attribute $X_j$, the empirical effective quantiles are balanced:
\begin{equation}
\label{eq:effect.quant}
\mathbb{P}_n\{Y>\hat{q}_\tau(X) \given X_j=0\} \approx \mathbb{P}_n\{Y>\hat{q}_\tau(X) \given X_j=1\} \approx 1-\tau,
\end{equation}
where $\mathbb{P}_n$ denotes the empirical probability measure on the training set.

To see why~\eqref{eq:effect.quant} holds, consider the dual of the quantile regression LP \eqref{eq:dual}.
This optimization has Lagrangian
\[
\mathcal{L}(b,\beta)= -Y^Tb+\beta^T(X^T b-(1-\tau)X^T\mathds{1})
%= & -(Y-X\beta)^T b-(1-\tau)\beta^T X^T\mathds{1}\\
=  -\sum_{i\leq n}(Y_i-X_i^T\beta)b_i-(1-\tau)\sum_{i\leq n}X_i^T\beta.
\]
For fixed $\beta$, the vector $b\in [0,1]^n$ minimizing the Lagrangian tends to lie on the ``corners'' of the $n$-dimensional cube, with many of its coordinates taking value either 0 or 1 depending on the sign of $Y_i-X_i^T\beta$. We thus arrive at a characterization for $\hat{b}_\tau$, the solution to the dual program. For $i$ such that $Y_i\neq X_i^T\hat{\beta}_\tau$,
$
\hat{b}_{\tau,i}=\mathds{1}\{Y_i>X_i^T\hat{\beta}_\tau\}.
$
For $i$ such that $Y_i=X_i^T\hat{\beta}_\tau$, the values $\hat{b}_{\tau,i}$ are solutions to the linear system that makes~\eqref{eq:dual} hold. But with $p$ covariates and $n> p$, such equality will typically only occur at most $p$ out of $n$ terms. For large $n$, these only enter the analysis as lower order terms. Excluding these points, the equality constraint in~\eqref{eq:dual} translates to
\begin{equation}\label{eq:effect.quant1}
\sum_{i}X_{ij}\mathds{1}\{Y_i>X_i^T\hat{\beta}_\tau\}=(1-\tau)\sum_{i}X_{ij}\quad\text{for all }j.
\end{equation}
Assuming that the intercept is included as one of the regressors, the above implies that
\begin{equation*}
\frac{1}{n}\sum_{i} \mathds{1}\{Y_i>X_i^T\hat{\beta}_\tau\}=1-\tau,
\end{equation*}
which together with~\eqref{eq:effect.quant1}, implies balanced effective quantiles for binary $X_{\cdot j}$.
In particular, if the protected binary variable $A$ is included in the regression, the resulting model will be fair on the training data,
in the sense that the quantiles for the subpopulations $A=0$ and $A=1$ will be approximately equal, and at the targeted level $\tau$.

This insight gives reason to believe that the quantile regression coefficients, when evaluated on an independent heldout set, should still produce conditional quantile estimates that are what we are calling $\sqrt{n}$-fair. In the following section we establish $\sqrt{n}$-fairness for our proposed adjustment procedure. This requires us to again exploit the connection between the regression coefficients and the fairness measurements formed by the duality of the two linear programs.

% !TEX root = main.tex

\vskip20pt
\section{Proof Techniques}\label{sec:proof}

In this section we outline the main steps in the proofs of Theorems~\ref{thm:fair} and~\ref{thm:risk} on the fairness and risk properties of the adjustment procedure. We defer the details of the proofs to Section~\ref{sec:pop}.

We first establish some necessary notation.
From the construction of $\tilde{q}_\tau$, the event $\{y>\tilde{q}_\tau\}$ is equivalent to $\{r>\hat{\mu}_\tau a+\hat{\nu}_\tau\}$ for $r=y-\hat{q}_\tau(a,x)$, which calls for analysis of stochastic processes of the following form. For $d\in\mathbb{R}^n$, let
\[
W_d(\mu,\nu)=\frac{1}{n}\sum_{i\leq n}d_i\left\{R_i>\mu A_i+\nu\right\}.
\]
Let $\overline{W}_d(\mu,\nu)=\mathbb{E}W_d(\mu,\nu)$. It is easy to check that
\[
E_F(\{y>\tilde{q}_\tau\})=\overline{W}_{\mathbbm{1}}\left(\hat{\mu}_\tau,\hat{\nu}_\tau\right),\;\;\;\;\;\text{Cov}_F(a,\{y>\tilde{q}_\tau\})=\overline{W}_{A-\mathbb{E}A}\left(\hat{\mu}_\tau,\hat{\nu}_\tau\right).
\]
The following lemma is essential for establishing concentration results of the $W$ processes around $\overline{W}$, on which the proofs of Theorem~\ref{thm:fair} and Theorem~\ref{thm:risk} heavily rely.

\begin{lemma}\label{lmm:emp}
Suppose $\mathcal{F}$ is a countable family of real functions on $\mathcal{X}$ and $P$ is some probability measure on $\mathcal{X}$. Let $X_1, ..., X_n \stackrel{i.i.d.}{\sim} P$. If
\begin{enumerate}
\item there exists $F:\mathcal{X}\rightarrow \mathbb{R}$ such that $|f(x)|\leq F(x)$ for all $x$ and $C^2:=\int F^2 dP<\infty$;
\item the collection $\text{Subgraph}(\mathcal{F})=\bigl\{\{(x,t)\in\mathcal{X}\times \mathbb{R}: f(x)\leq t\}: f\in \mathcal{F}\bigr\}$ is a Vapnik-Chervonenkis(VC) class of sets,
\end{enumerate}
then there exist positive constant $C_1,C_2$ for which
\begin{equation}
\label{eq:emp.lemma}
\mathbb{P}\left\{\sqrt{n}\sup_{f\in\mathcal{F}}\left|\frac{1}{n}\sum_{i\leq n}f(X_i)-\int f dP\right|>t\right\}\leq 4\mathbb{E}\exp\left(-\left(\frac{t}{C_2\|F\|_n}-1\right)^2\right)+4\mathbb{P}\Bigl\{2\|F\|_n> t/C_2\Bigr\},\;\;\; \forall t\geq C_1,
\end{equation}
where $\|F\|_n =n^{-1/2}\sqrt{\sum F(X_i)^2}$.
In particular, %if $\int F^2dP<\infty$, then~\eqref{eq:emp.lemma} implies the easy consequence that
$\sqrt{n}\sup_{f\in\mathcal{F}}\left|\frac{1}{n}\sum_{i\leq n}f(X_i)-\int f dP\right|=O_p(1)$.
\end{lemma}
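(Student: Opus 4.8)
The plan is to prove Lemma~\ref{lmm:emp} by symmetrization followed by a chaining argument carried out conditionally on the sample. The role of the two hypotheses is clear-cut: finiteness of $\int F^2\,dP$ controls the random envelope norm $\|F\|_n$, while the VC-subgraph property forces the covering numbers of $\mathcal F$ in the empirical $L_2$ metric to grow only polynomially, so that Dudley's entropy integral converges with its size governed by $\|F\|_n$. First I would invoke the usual symmetrization inequality: for $t$ exceeding a threshold $C_1$ (chosen, via Chebyshev together with $\operatorname{Var}_P f\le \int F^2\,dP = C^2$, so that the symmetrization lemma applies for all $n$) one has
\[
\mathbb{P}\Bigl\{\sqrt n\sup_{f\in\mathcal F}\bigl|\tfrac1n\textstyle\sum_{i\le n} f(X_i)-\int f\,dP\bigr|>t\Bigr\}\;\le\;2\,\mathbb{E}\,\mathbb{P}_\varepsilon\Bigl\{\sqrt n\sup_{f\in\mathcal F}\bigl|\tfrac1n\textstyle\sum_{i\le n}\varepsilon_i f(X_i)\bigr|>t/2\Bigr\},
\]
where $\varepsilon_1,\dots,\varepsilon_n$ are i.i.d.\ sign variables independent of $X_1,\dots,X_n$ and $\mathbb{P}_\varepsilon$ denotes conditioning on the data.

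Next I would work conditionally on $X_1,\dots,X_n$. The Rademacher process $f\mapsto \frac1n\sum_i\varepsilon_i f(X_i)$ is conditionally sub-Gaussian with respect to the random semimetric $d_n(f,g)^2=\frac1n\sum_i\bigl(f(X_i)-g(X_i)\bigr)^2$, whose diameter on $\mathcal F$ is at most $2\|F\|_n$ since $|f|\le F$. Because $\operatorname{Subgraph}(\mathcal F)$ is a VC class, $\mathcal F$ has polynomial uniform entropy: there are constants $A,W$, depending only on the VC index, with $N\bigl(u\|F\|_{L_2(Q)},\mathcal F,L_2(Q)\bigr)\le A u^{-W}$ for every finitely supported probability measure $Q$ and $u\in(0,1]$; applying this with $Q=\mathbb P_n$ bounds the $d_n$-covering numbers of $\mathcal F$. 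Dudley's entropy bound then gives, conditionally,
\[
\mathbb{E}_\varepsilon\sup_{f\in\mathcal F}\Bigl|\tfrac1n\textstyle\sum_{i\le n}\varepsilon_i f(X_i)\Bigr|\;\le\;\frac{C'}{\sqrt n}\int_0^{2\|F\|_n}\!\sqrt{\log N(u,\mathcal F,d_n)}\;du\;\le\;\frac{C''\|F\|_n}{\sqrt n},
\]
since $\int_0^{a}\sqrt{\log\bigl(A(a/u)^W\bigr)}\,du$ is a constant multiple of $a$, the constant depending only on $A,W$.

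It then remains to upgrade this mean bound to a tail bound and to integrate out the data. As a function of $(\varepsilon_1,\dots,\varepsilon_n)$ the supremum has bounded differences — flipping $\varepsilon_i$ moves it by at most $\tfrac2n F(X_i)$, so the squared increments sum to at most $4\|F\|_n^2/n$ — and McDiarmid's inequality combined with the previous display yields, for $u$ at least a suitable constant multiple of $\|F\|_n$,
\[
\mathbb{P}_\varepsilon\Bigl\{\sqrt n\sup_{f\in\mathcal F}\bigl|\tfrac1n\textstyle\sum_{i\le n}\varepsilon_i f(X_i)\bigr|>u\Bigr\}\;\le\;\exp\!\bigl(-(u/(C_2\|F\|_n)-1)^2\bigr)
\]
after relabelling constants. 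Taking $u=t/2$ and splitting the expectation in the symmetrization display over the event $\{2\|F\|_n\le t/C_2\}$ and its complement — using the displayed bound on the former and the trivial bound $1$ on the latter — produces exactly \eqref{eq:emp.lemma}, the factor $4$ absorbing the symmetrization constant and the two-sidedness of the supremum. Finally, $\|F\|_n^2\to\int F^2\,dP<\infty$ almost surely by the law of large numbers, so $\|F\|_n=O_p(1)$; hence both $\mathbb{P}\{2\|F\|_n>t/C_2\}$ and, by dominated convergence, $\mathbb{E}\exp\bigl(-(t/(C_2\|F\|_n)-1)^2\bigr)$ tend to $0$ as $t\to\infty$ uniformly in $n$, which gives the $O_p(1)$ conclusion.

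The one genuinely substantive step is the entropy estimate: turning the bare combinatorial hypothesis that $\operatorname{Subgraph}(\mathcal F)$ is a VC class into a polynomial bound on $N(u,\mathcal F,L_2(\mathbb P_n))$ that is uniform over the random empirical measure and correctly normalized by $\|F\|_n$, so that the Dudley integral in the second display both converges and scales linearly in $\|F\|_n$. Everything downstream — symmetrization, chaining, McDiarmid, and the integration over the data — is standard, and the countability of $\mathcal F$ is assumed precisely to sidestep measurability of the suprema.
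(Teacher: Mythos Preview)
Your proof is correct and follows the same overall architecture as the paper's: symmetrize to a Rademacher process, analyze it conditionally on the sample using the VC-subgraph hypothesis to control entropy, then integrate out the data by splitting on the event $\{2\|F\|_n\le t/C_2\}$. The one substantive place you diverge is in how the conditional tail bound for $\sup_f|Z_n(f)|$ is obtained. The paper invokes Dirksen's generic chaining theorem to get a sub-Gaussian tail directly, bounding the $\gamma_2$ functional by the Dudley integral, and it derives the $L^2(P_n)$ covering-number bound from an $L^1$ estimate of Nolan--Pollard via the change of measure $dQ/dP_n=F$. You instead bound the conditional \emph{mean} by Dudley's integral and then concentrate around it via McDiarmid's bounded-differences inequality, and you cite the standard $L^2$ uniform-entropy bound for VC-subgraph classes directly. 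Your route is more elementary---the paper itself remarks that generic chaining is ``a vast overkill'' here and that classical chaining suffices---while the paper's route has the minor convenience of producing the tail in one stroke without a separate concentration step. A small quibble: the symmetrization constants you quote ($2$ and $t/2$) are not quite the standard ones (the paper proves and uses $4$ and $t/4$), but since $C_1,C_2$ are unspecified this is harmless.
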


 We note that more standard results could be used for concentration of measure over VC classes of Boolean functions, or over bounded classes of real functions. We use the lemma above because of its generality and to make our analysis self-contained. The proof of this result is included in Section~\ref{sec:pop}.

 Recall that $\text{Cov}_F(a,\{y>\tilde{q}_\tau\})=\overline{W}_{A-\mathbb{E}A}(\hat{\mu}_\tau,\hat{\nu}_\tau)$.
 We have
 \begin{align*}
 \sup_\tau\left|\text{Cov}_F\left(a,\{y>\tilde{q}_\tau(a,x)\}\right)\right|
 \leq
  \sup_{\mu,\nu\in\mathbb{R}}\left|\left(W_{A-\mathbb{E}A}(\mu,\nu)-\overline{W}_{A-\mathbb{E}A}(\mu,\nu)\right)\right|+ \sup_\tau\left|W_{A-\mathbb{E}A}\left(\hat{\mu}_\tau,\hat{\nu}_\tau\right)\right|.
\end{align*}
 We use Lemma~\ref{lmm:emp} to control the tail of  the first term
 using the VC class $\text{Subgraph}(\mathcal{F})$ where $f_{\mu,\nu}(a,r)=(a-\mathbb{E}a)\mathds{1}\{r>\mu a+\nu\}$ and $\mathcal{F}=\{f_{\mu,\nu}:\mu,\nu\in\mathbb{Q}\}$.
 For the second term we have
 \[
 W_{A-\mathbb{E}A}\left(\hat{\mu}_\tau,\hat{\nu}_\tau\right) = \frac{1}{n}\sum_{i\leq n}\left(A_i-\mathbb{E}A_i\right)\left\{R_i>\hat{\mu}_\tau A_i+\hat{\nu}_\tau\right\}.
 \]
 and we exploit the dual form of quantile regression in terms of rank scores
 together with large deviation bounds for sub-Gaussian random variables.

 The proof of Theorem~\ref{thm:risk} similarly exploits Lemma~\ref{lmm:emp}, but using
 the VC class $\text{Subgraph}(\mathcal{F})$ where $f_{\mu,\nu}(a,r)=\rho_\tau(r-\mu a-\nu)$ and $\mathcal{F}=\{f_{\mu,\nu}:\mu,\nu\in\mathbb{Q}\}$.

% !TEX root = main.tex

\vskip20pt
\section{Experiments}
\label{sec:experiments}

\subsection{Experiments on synthetic data}
In this section we show experiments on synthetic data that verify our theoretical claims.
\footnote{Code and data for all experiments are available online at \url{https://drive.google.com/file/d/1Ibaq5VWaAE4539hec4-UdIOgPsNv0x_t/view?usp=sharing}}
The experiment is carried out in $N=10{,}000$ independent repeated trials. In each trial, $n=1{,}000$ data points $(X,A,Y)\in \mathbb{R}^p\times \{0,1\}\times \mathbb{R}$ are generated independently as follows:
\begin{itemize}
\item Let $p=20$. Generate $X$ from the multivariate distribution with correlated attributes: $X\sim \mathcal{N}(0,\Sigma)$, where the the covariance matrix $\Sigma\in \mathbb{R}^{p\times p}$ takes value $1$ for diagonal entries and $0.3$ for off-diagonal entries.
\item The protected attribute $A$ depends on $X$ through a logistic
model: $A\given X\sim \text{Bernoulli}(b)$ with
\begin{equation*}
b=\exp\left(X^T\gamma\right)/\left(1+\exp\left(X^T\gamma\right)\right).
\end{equation*}
\item Given $A,X$, generate $Y$ from a heteroscedastic model:
$Y\given A,X\sim \mathcal{N}\left(X^T\beta+\mu A, (X^T\eta)^2\right)$.
\end{itemize}
The parameters $\beta$, $\gamma$, $\eta$ are all generated independently from $\mathcal{N}(0,I_p)$ and stay fixed throughout all trials. The coefficient $\mu$ is set to be 3.

In each of the $N$ trials, conditional quantile estimators are trained on a training set of size $n/2$ and evaluated on the remaining size $n/2$ held out set. We train three sets of quantile  estimators at $\tau=0.5$:
\begin{enumerate}
\item Full quantile regression of $Y$ on $A$ and $X$.
\item Quantile regression of $Y$ on $X$ only.
\item Take the estimator from procedure 2 and correct it with the method described in Section~\ref{sec:results}.
\end{enumerate}

The average residuals $Y-\hat{q}_\tau(X,A)$ are then evaluated on the test set for the $A=0$ and $A=1$ subpopulations. In Figure~\ref{fig:synthetic} we display the histograms of these average residuals across all $N$ trials for the quantile regression estimator on $X$ (\ref{fig:residual.before}) and the corrected estimator (\ref{fig:residual.after}). In the simulation we are running, $A$ is positively correlated with the response $Y$. Therefore when $A$ is excluded from the regression, the quantile estimator underestimates when $A=1$ and overestimates when $A=0$. That is why we observe different residual distributions for the two subpopulations. This effect is removed once we apply the correction procedure, as shown in Figure~\ref{fig:fair.measure}.

%\begin{figure}
%\begin{center}
%\begin{tabular}{ccc}
%\includegraphics[width=.3\textwidth]{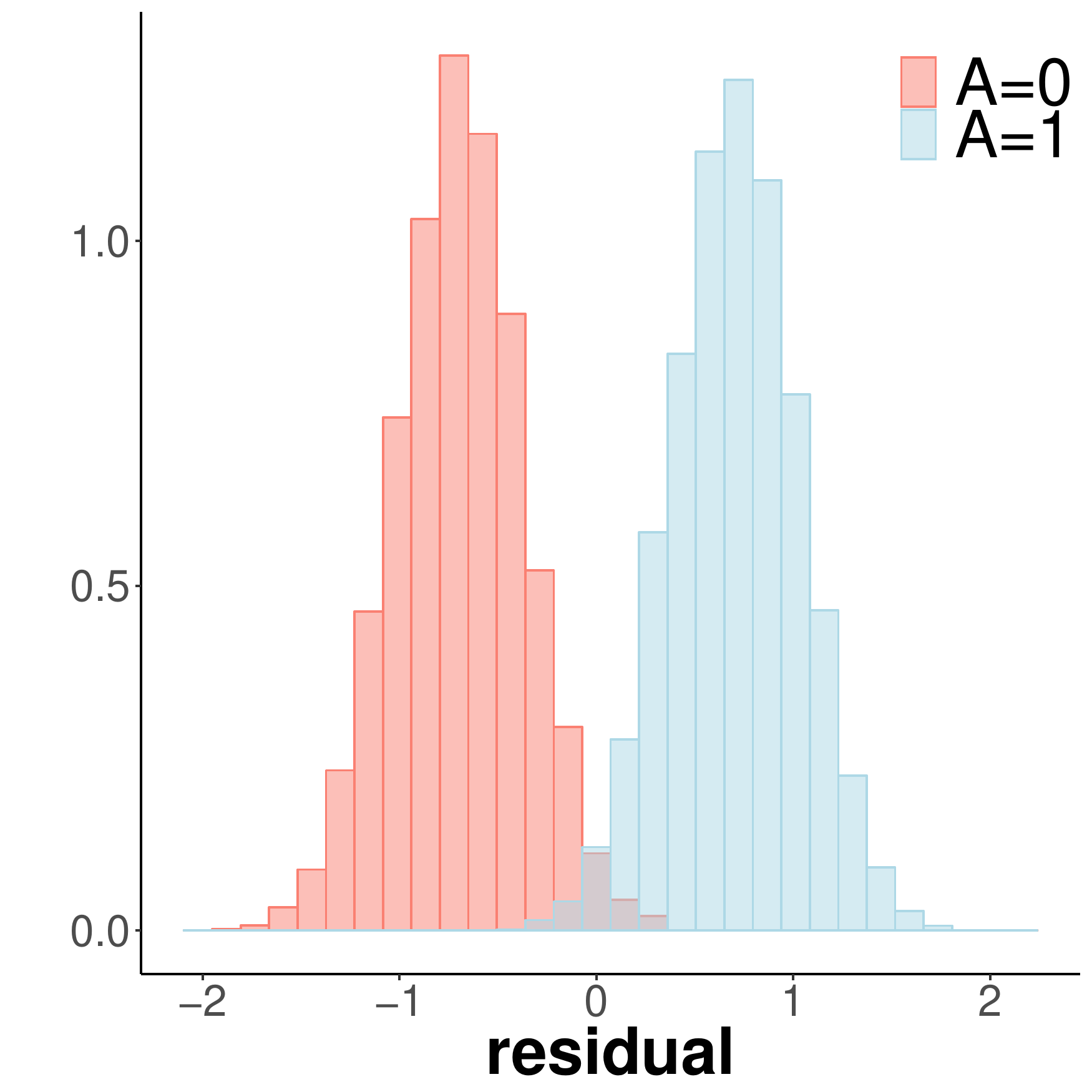} &
%\includegraphics[width=.3\textwidth]{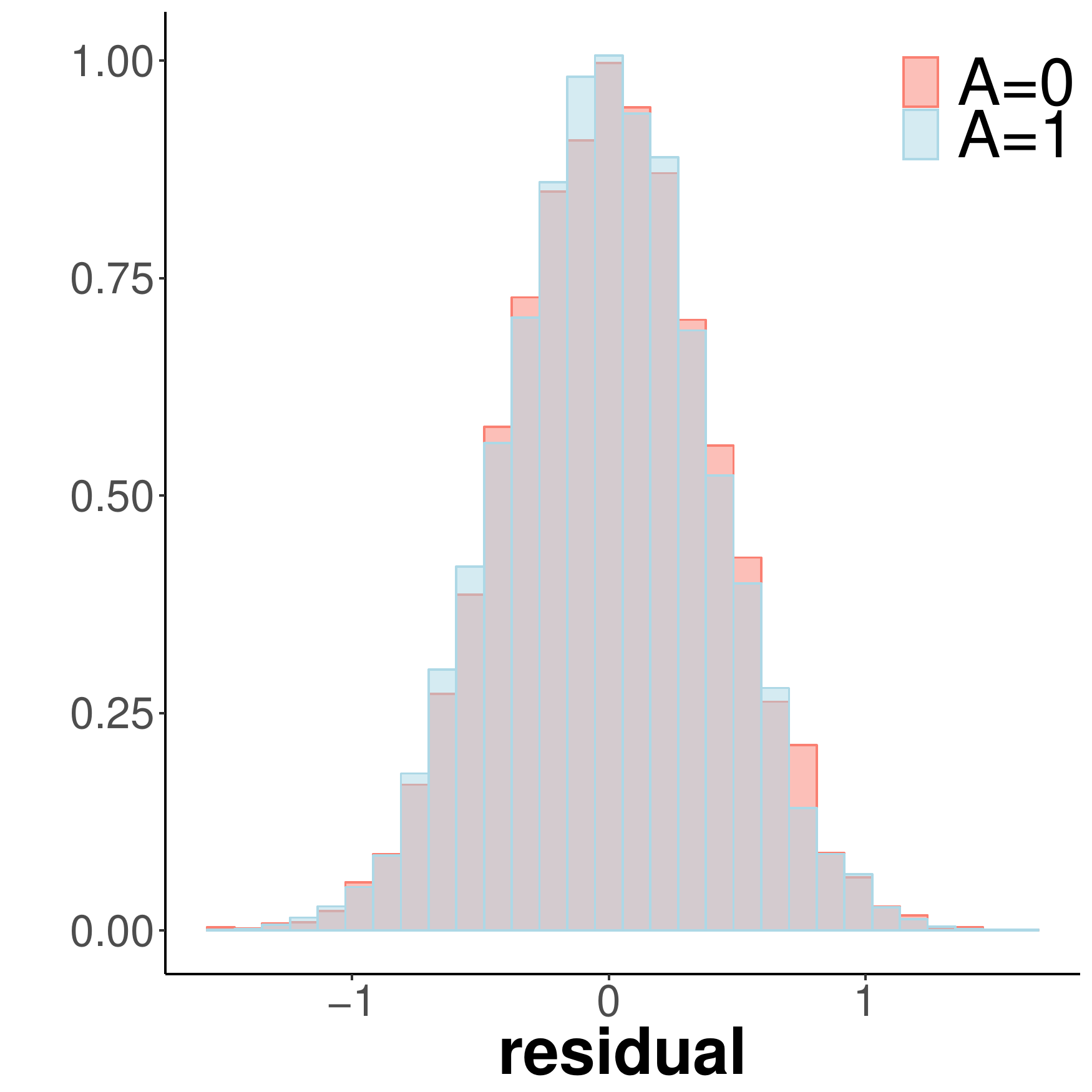} &
%\includegraphics[width=.3\textwidth]{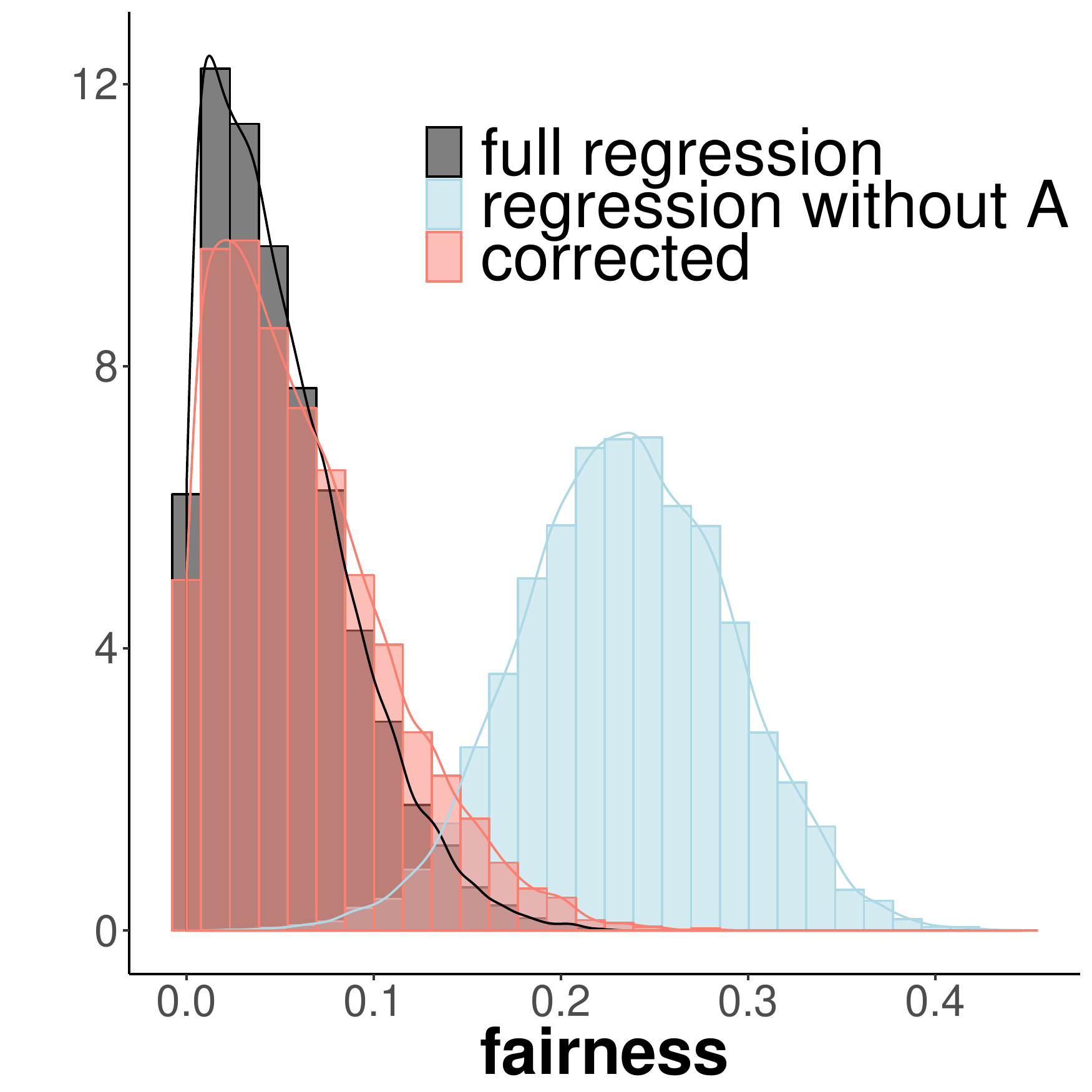}
%\end{tabular}
%\end{center}
%\caption{\small \it Histograms of the average residuals of the conditional quantile estimators. The height of the bins are normalized so they integrate to one.
%Histograms and density estimates of the fairness measures the estimator obtained by running quantile regression on $X$, before (salmon) and after (light blue) the adjustment procedure. The histogram obtained from the full regression (black) serves as a benchmark for comparison. {\bf Separate captions for subfigures}}
%\label{fig:residual}
%\end{figure}

We also test whether our correction procedure corrects the unbalanced effective quantiles of an unfair initializer. In each trial we measure the fairness level of an estimator $\widehat{q}_\tau$ by the absolute difference $|\hat\tau_1 - \hat\tau_0|$ between the effective quantiles of the two subpopulations on a heldout set $S$, where $\hat{\tau}_a$ is defined as in~\eqref{eq:effective.quantile}.

We established in previous sections that quantile regression excluding attribute $A$ is in general not fair with respect to $A$. A histogram of the fairness measure obtained from this procedure is shown in Figure~\ref{fig:fair.measure} (salmon). Plotted together are the fairness measures after the correction procedure (light blue). For comparison we also include the histogram obtained from the full regression (black). Note that the full regression has the ``unfair" advantage of having access to all observations of $A$. Figure~\ref{fig:fair.measure} shows that the correction procedure pulls the fairness measure to a level comparable to that of a full regression, which as we argued in Section~\ref{sec:training}, produces $\sqrt{n}$-fair estimators.

\begin{figure}
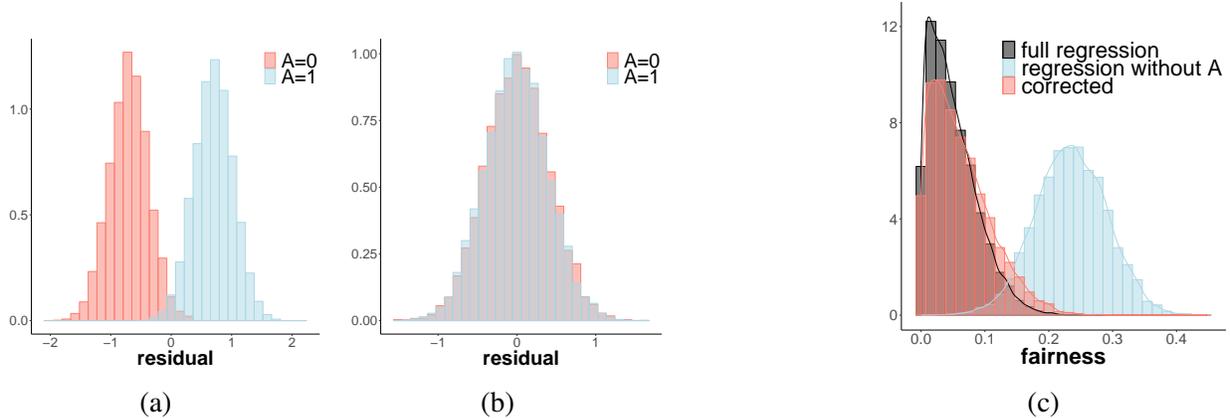

    \centering
    \begin{subfigure}[b]{0.27\textwidth}
    \begin{centering}
        \includegraphics[width=\textwidth]{fig/hist_residual_wob}
        \caption[justification   = centering]{}
        \label{fig:residual.before}
     \end{centering}
    \end{subfigure}
    \begin{subfigure}[b]{0.27\textwidth}
    \begin{centering}
        \includegraphics[width=\textwidth]{fig/hist_residual_corrected}
        \caption[justification   = centering]{}
        \label{fig:residual.after}
    \end{centering}
    \end{subfigure}
    \hfill
    \begin{subfigure}[b]{0.30\textwidth}
    \begin{centering}
        \includegraphics[width=\textwidth]{fig/hist_fairness}
        \caption[justification   = centering]{}
        \label{fig:fair.measure}
    \end{centering}
    \end{subfigure}
    \caption{\small \it From left to right: (a): histograms of average residuals for quantile regression of $Y$ on $X$ only; (b): histograms of average residuals for the corrected quantile estimators; (c): histograms and density estimates of the fairness measures obtained by running quantile regression on $X$, before (salmon) and after (light blue) the adjustment procedure. The histogram from the full regression (black) serves as a benchmark for comparison.}
    \label{fig:synthetic}
\end{figure}

% !TEX root = main.tex

\subsection{Birthweight data analysis}
\label{sec:birth}

%\begin{figure*}[ht]
%\begin{center}
%\def\pwidth{.15}
%\begin{tabular}{cccc}
%\includegraphics[width=\pwidth\textwidth]{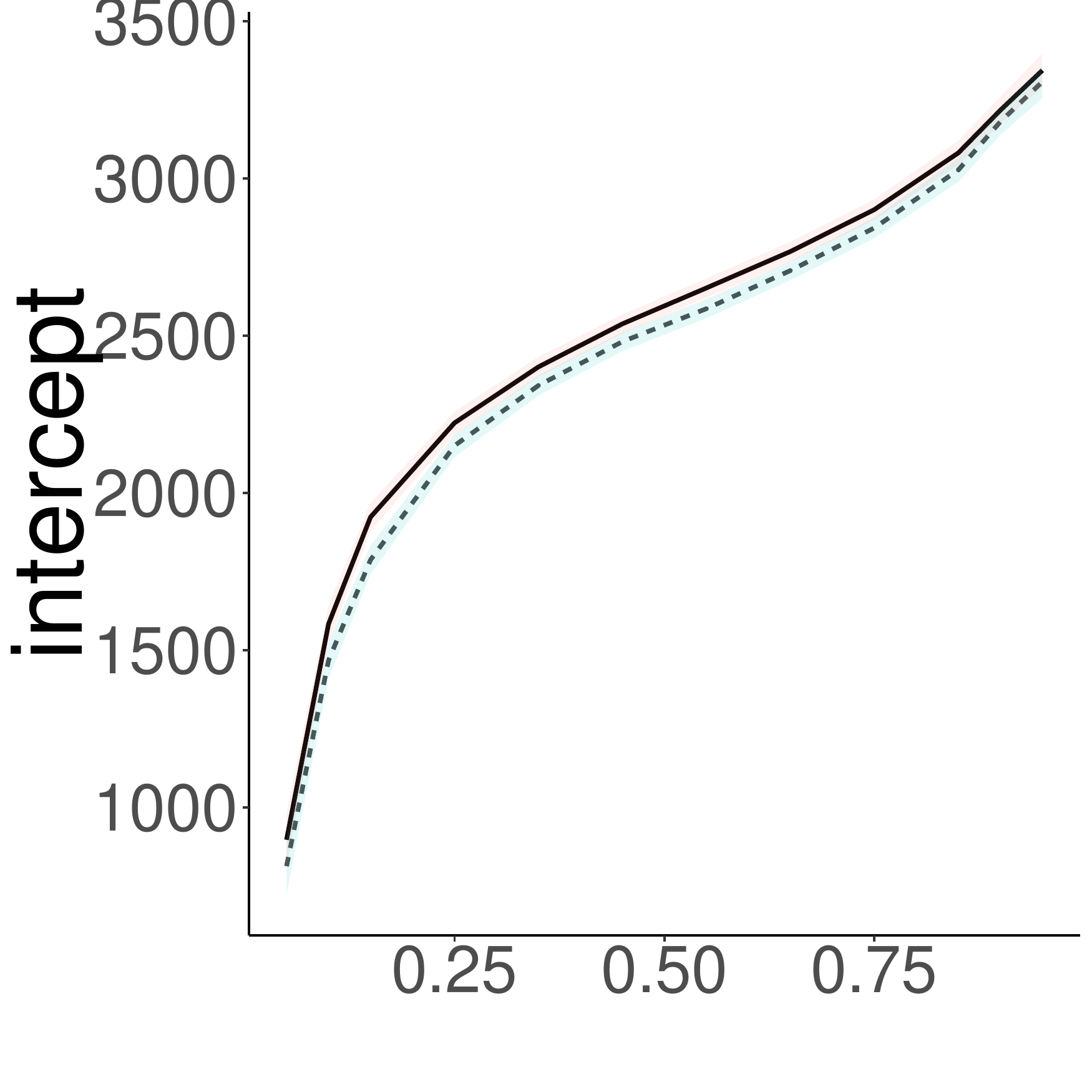} &
%\includegraphics[width=\pwidth\textwidth]{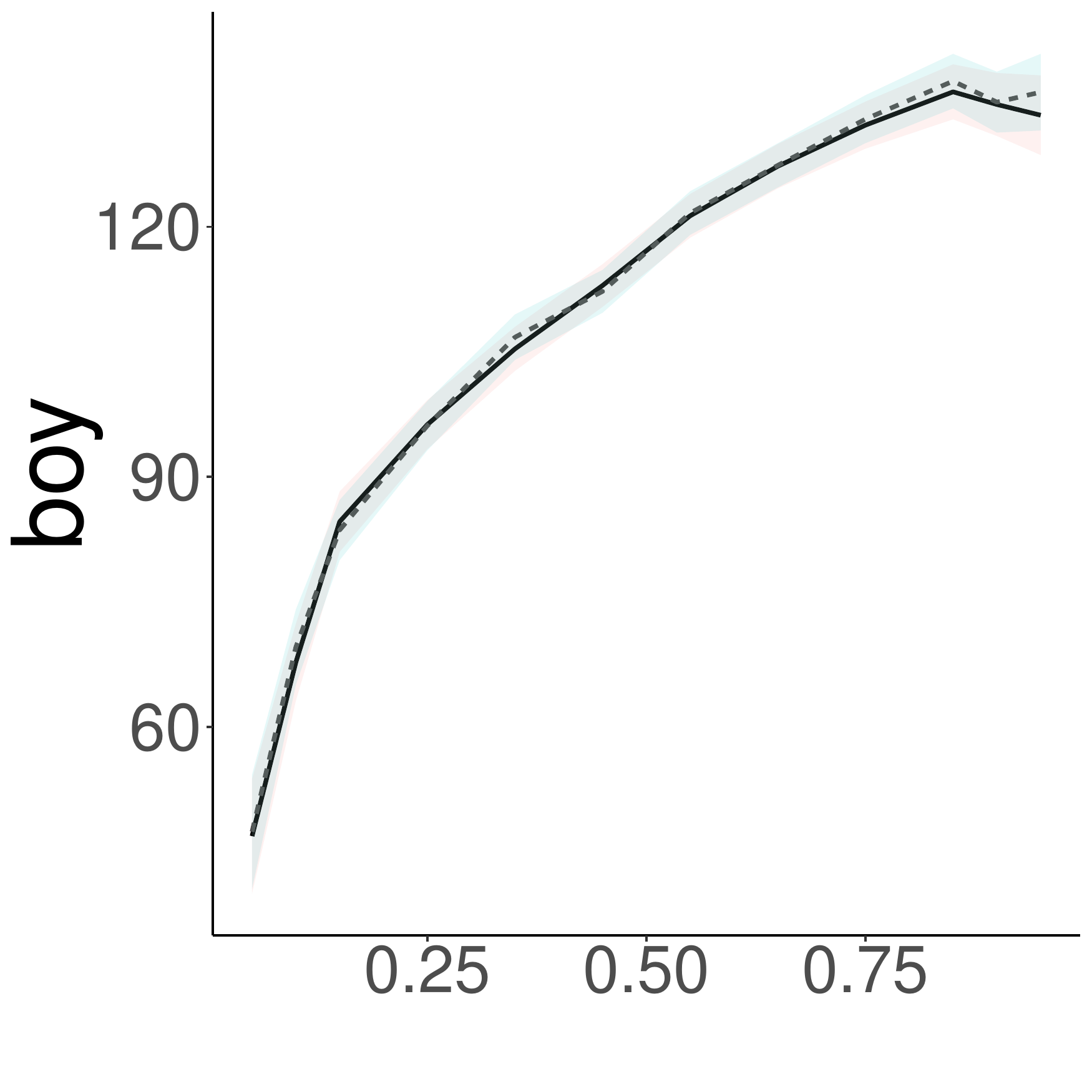} &
%\includegraphics[width=\pwidth\textwidth]{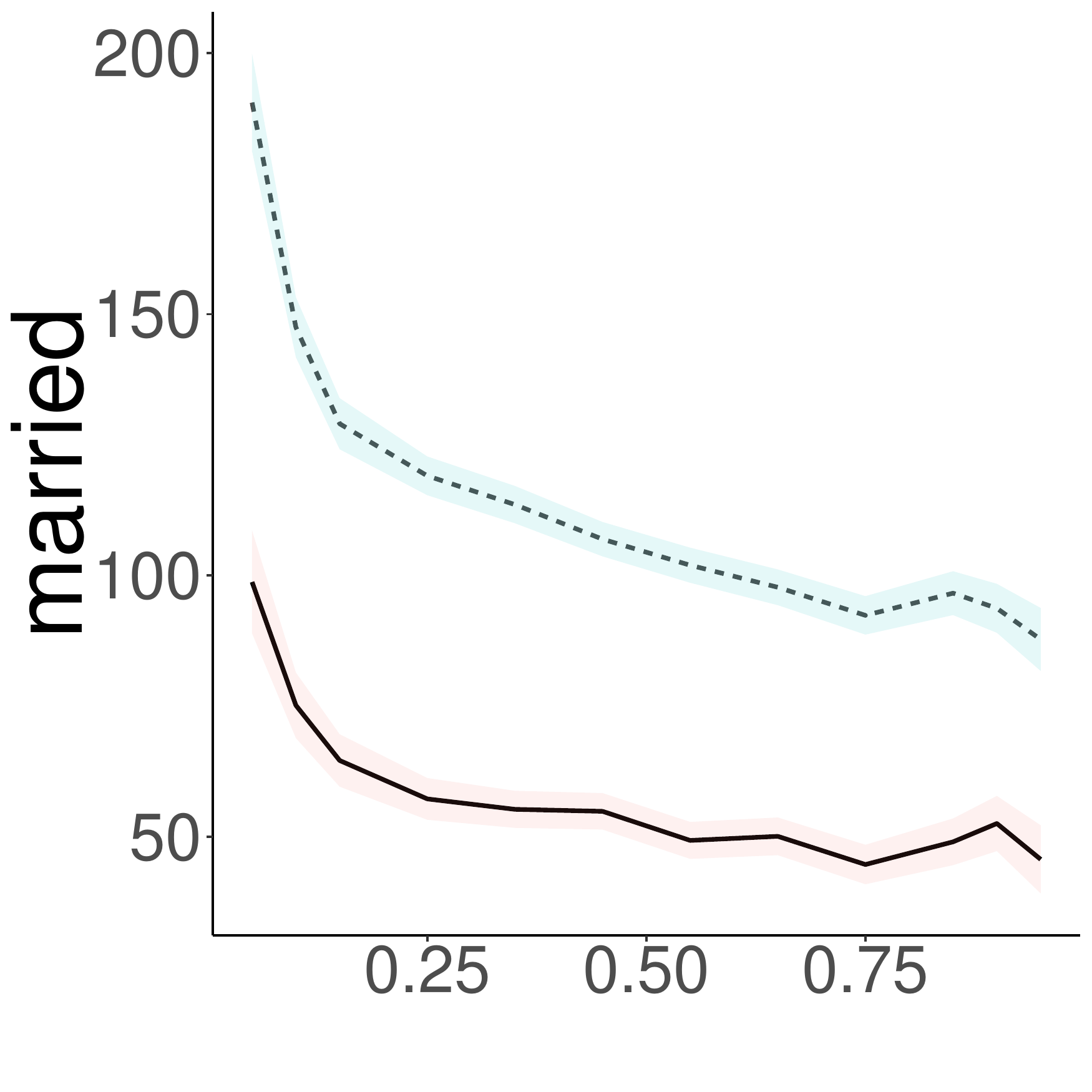} &
%\includegraphics[width=\pwidth\textwidth]{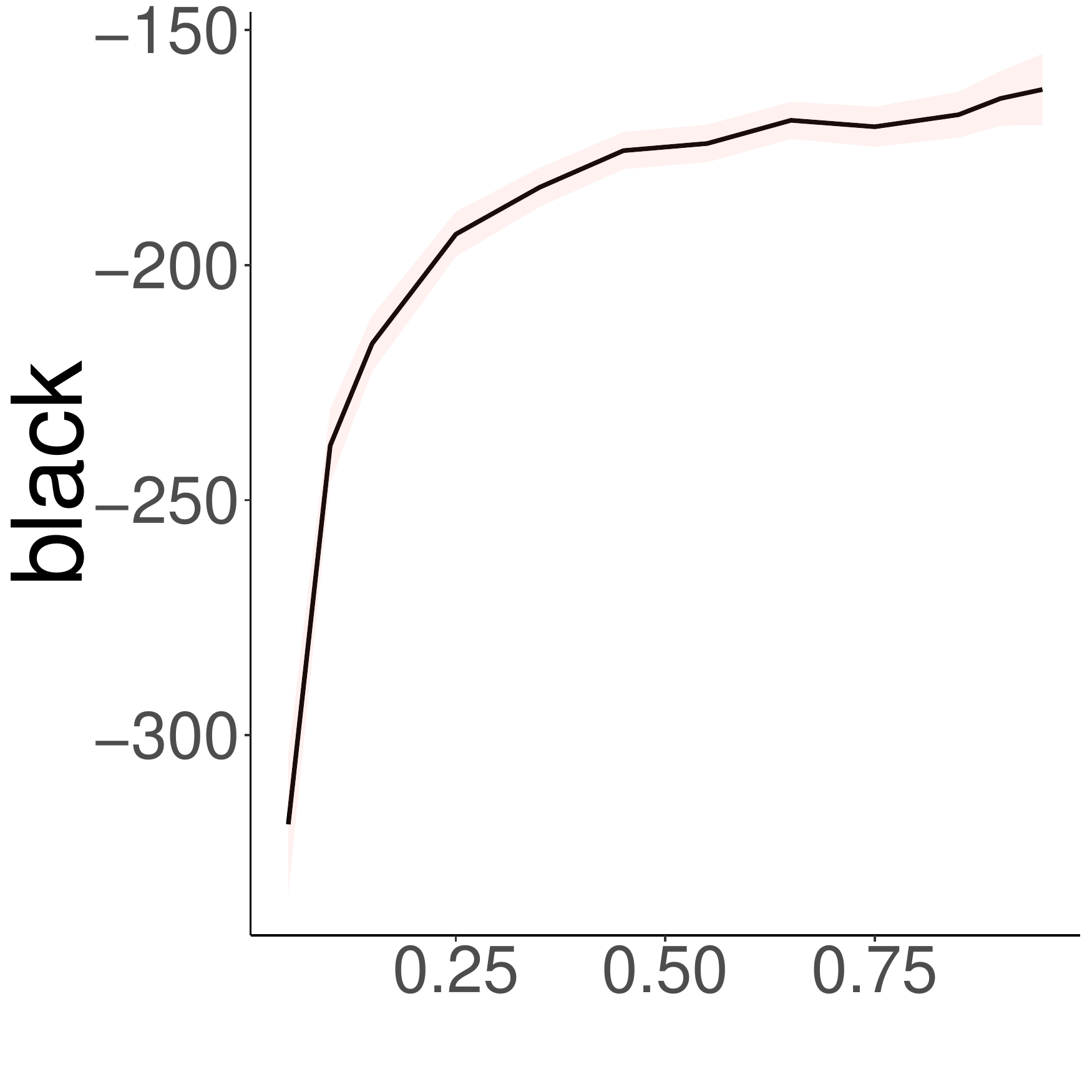} \\
%\includegraphics[width=\pwidth\textwidth]{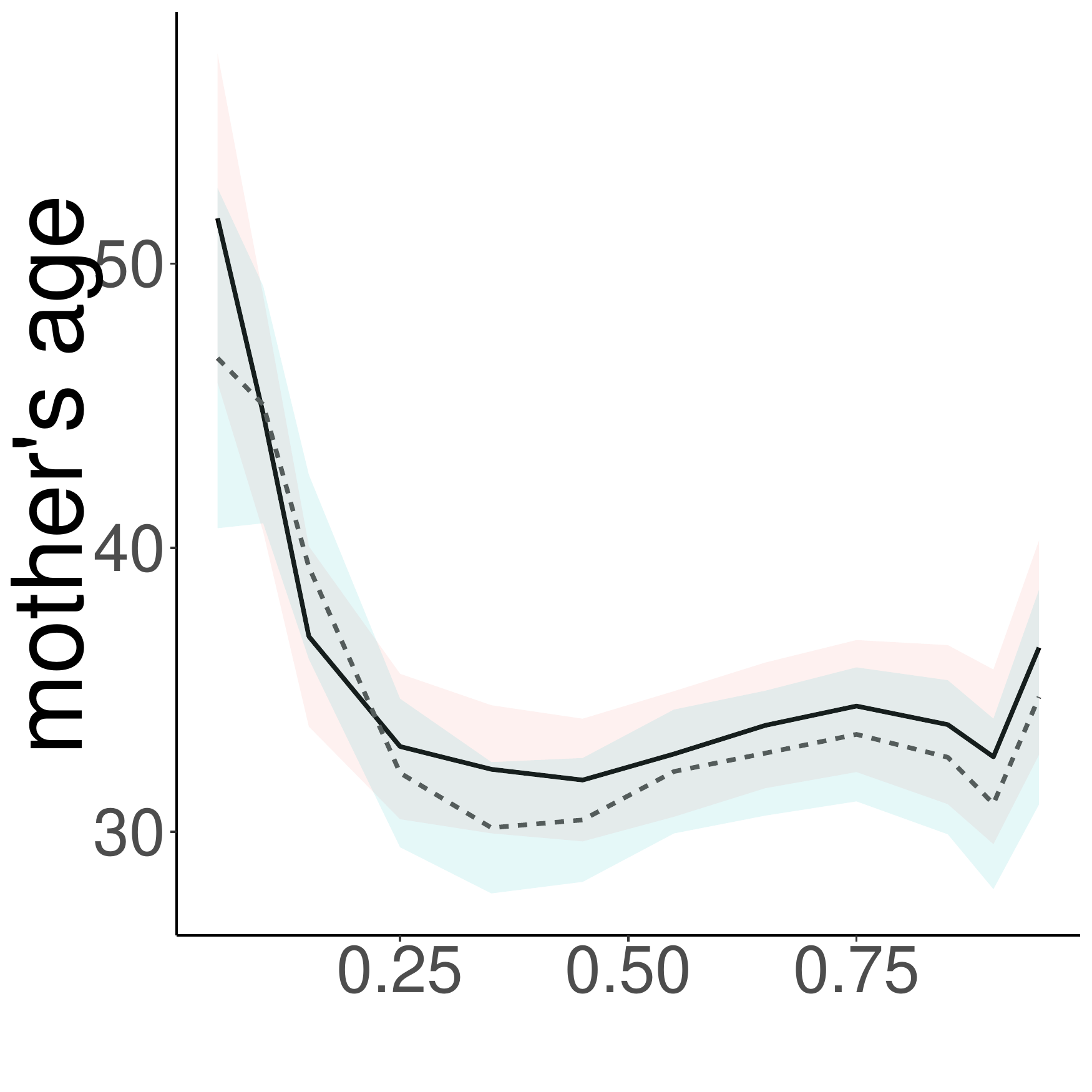} &
%\includegraphics[width=\pwidth\textwidth]{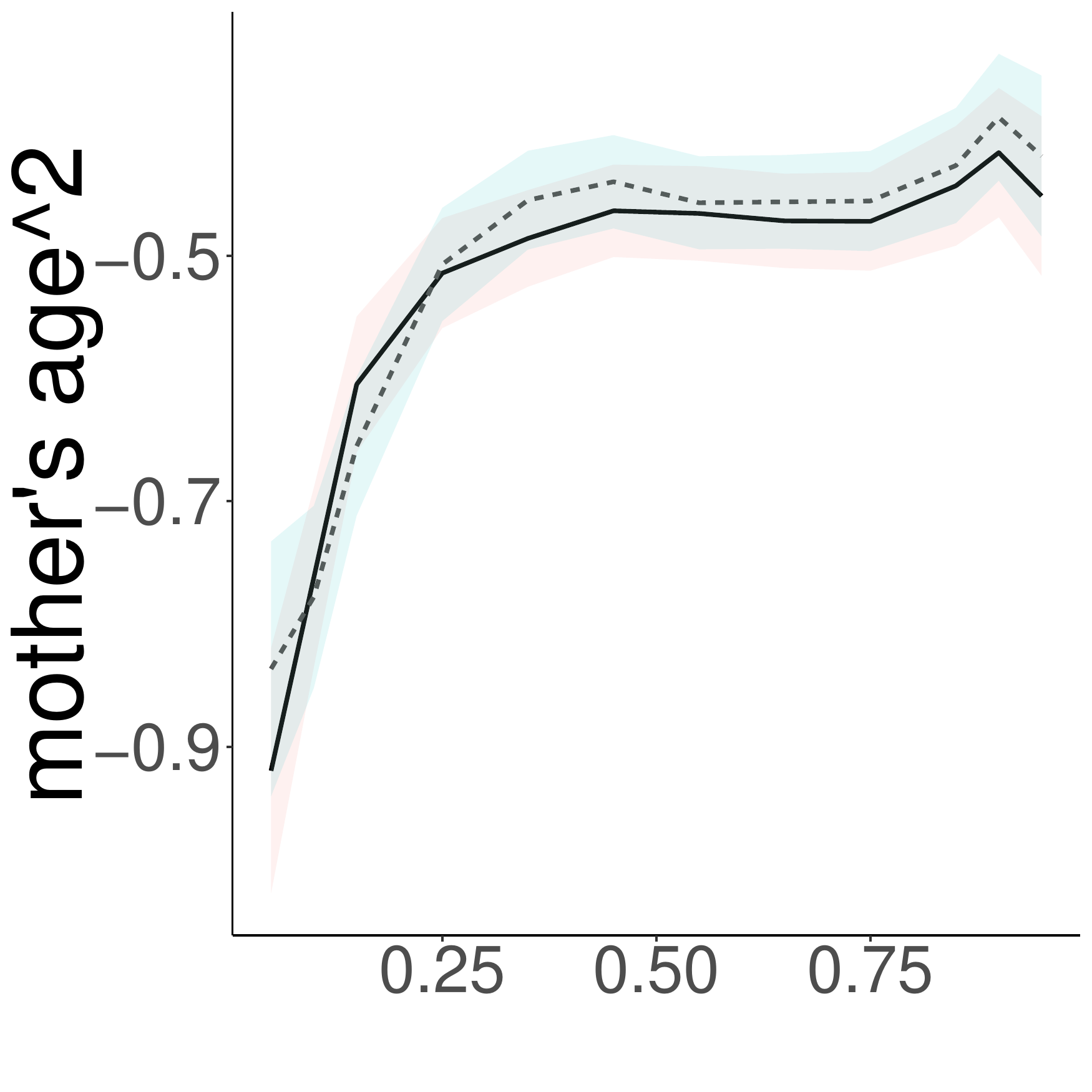} &
%\includegraphics[width=\pwidth\textwidth]{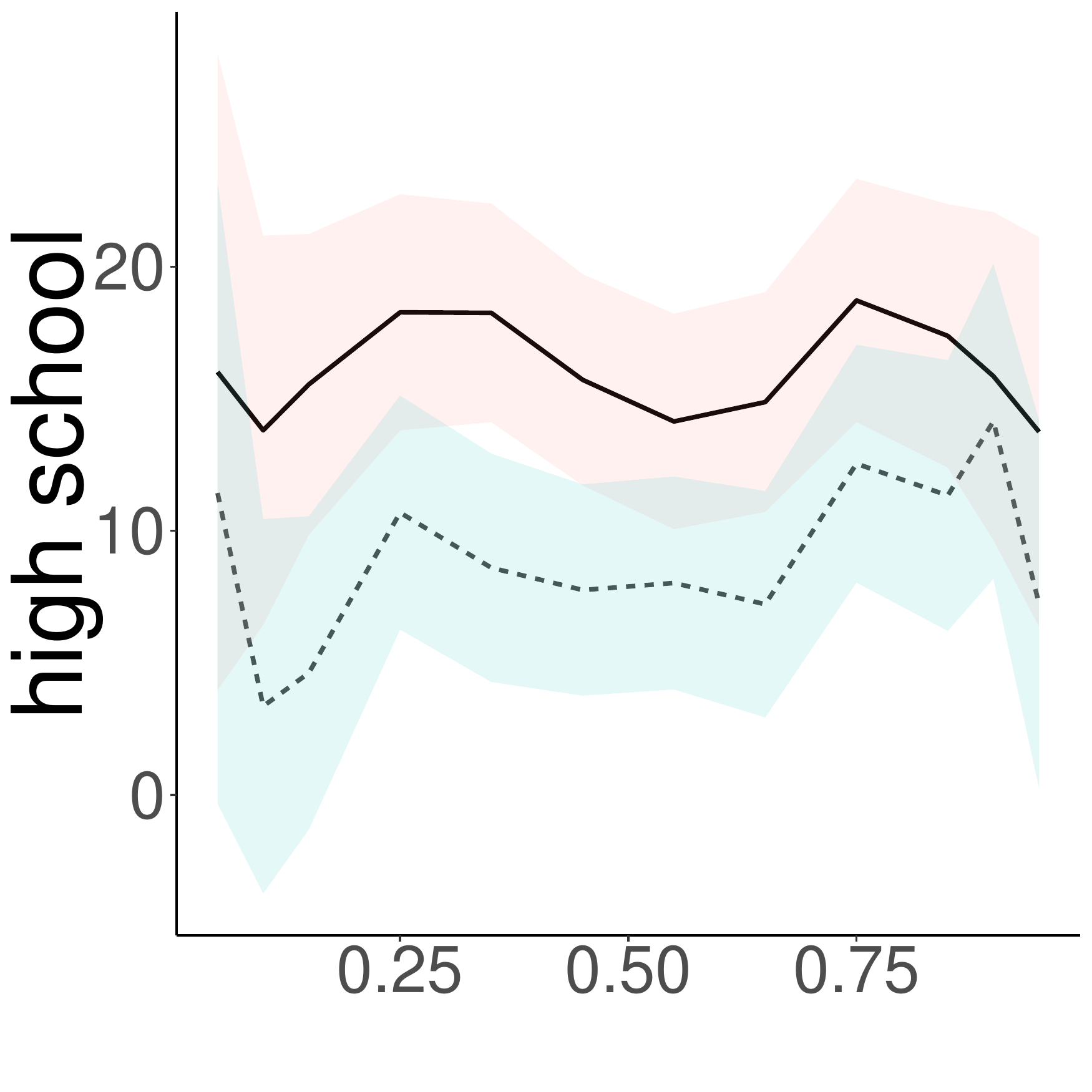} &
%\includegraphics[width=\pwidth\textwidth]{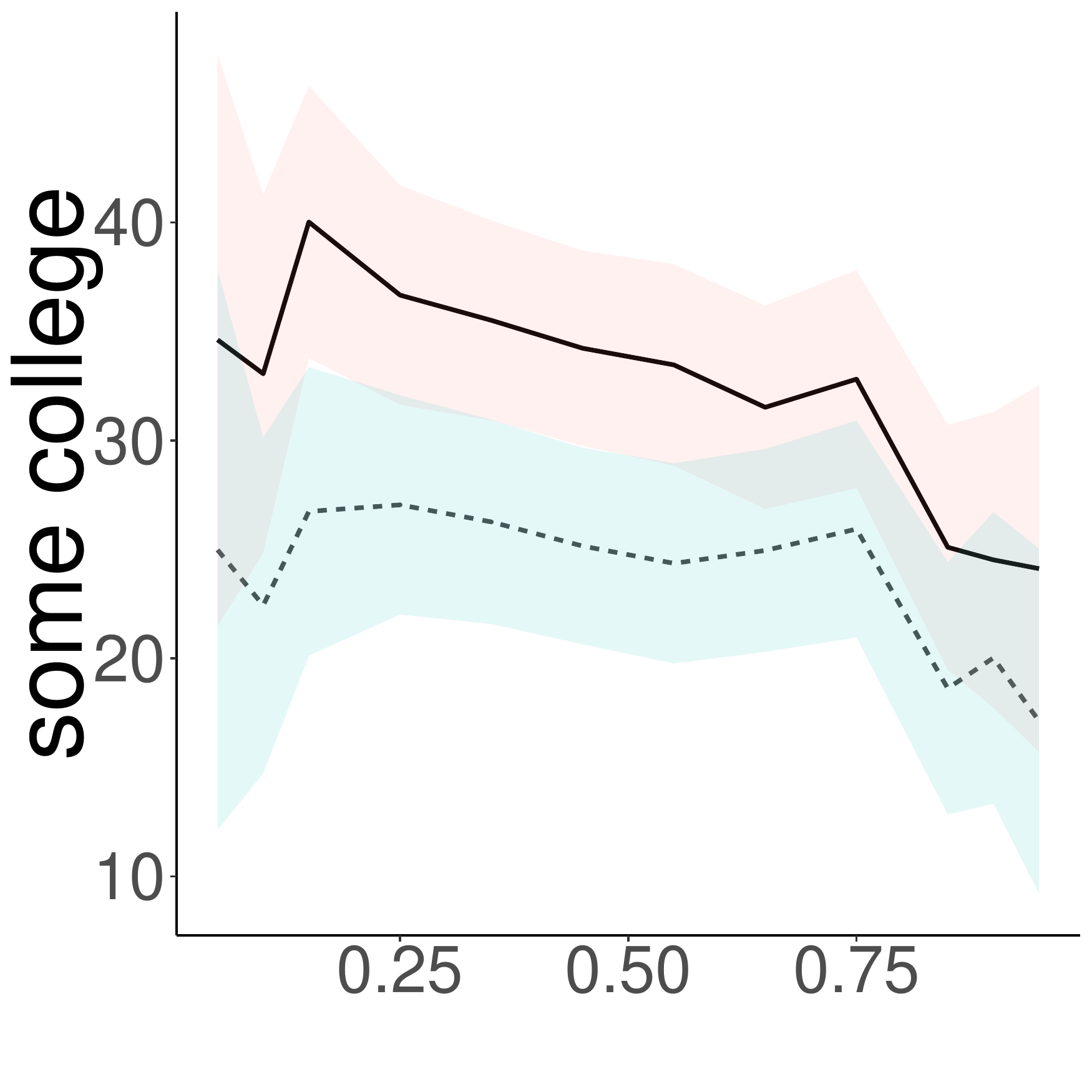} \\
%\includegraphics[width=\pwidth\textwidth]{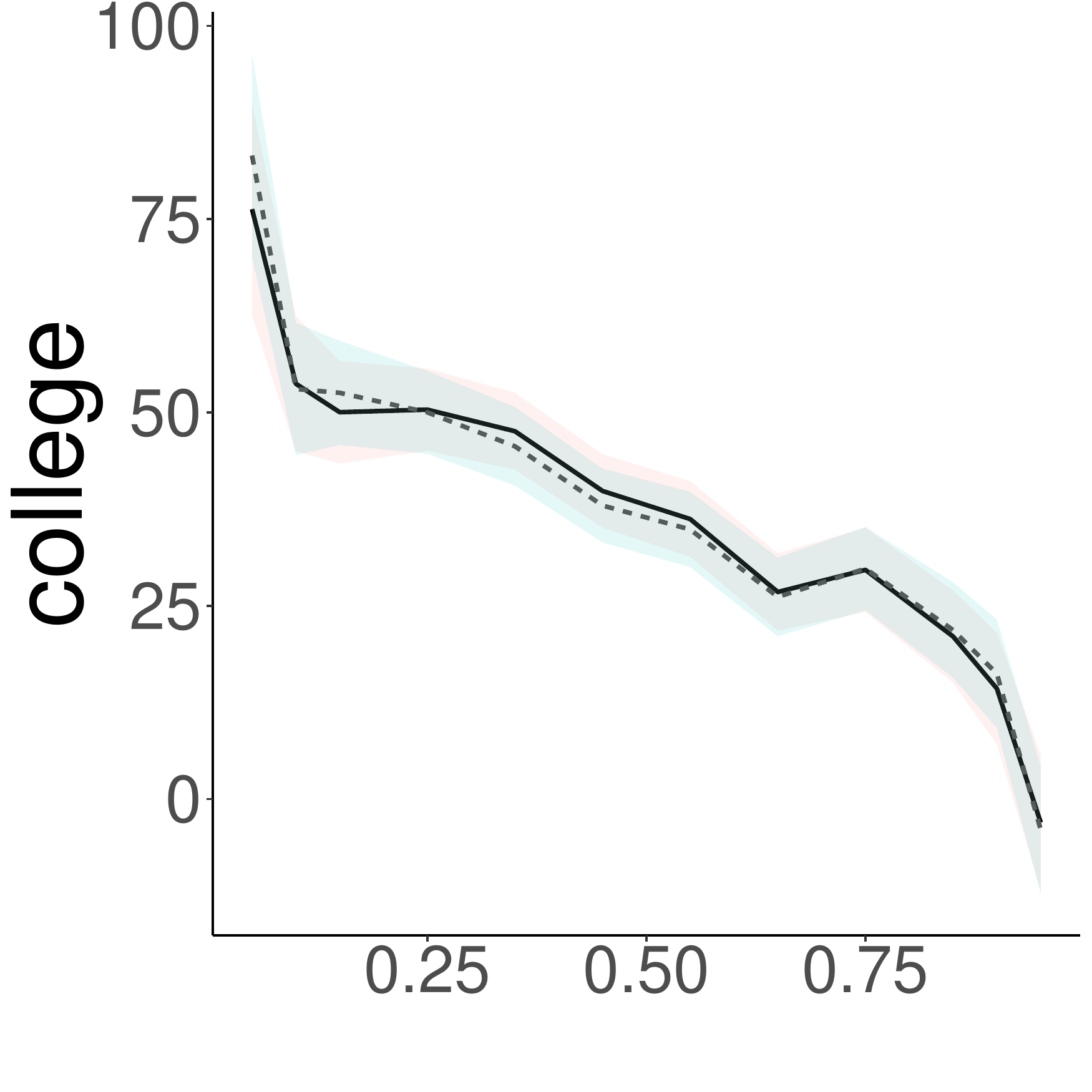} &
%\includegraphics[width=\pwidth\textwidth]{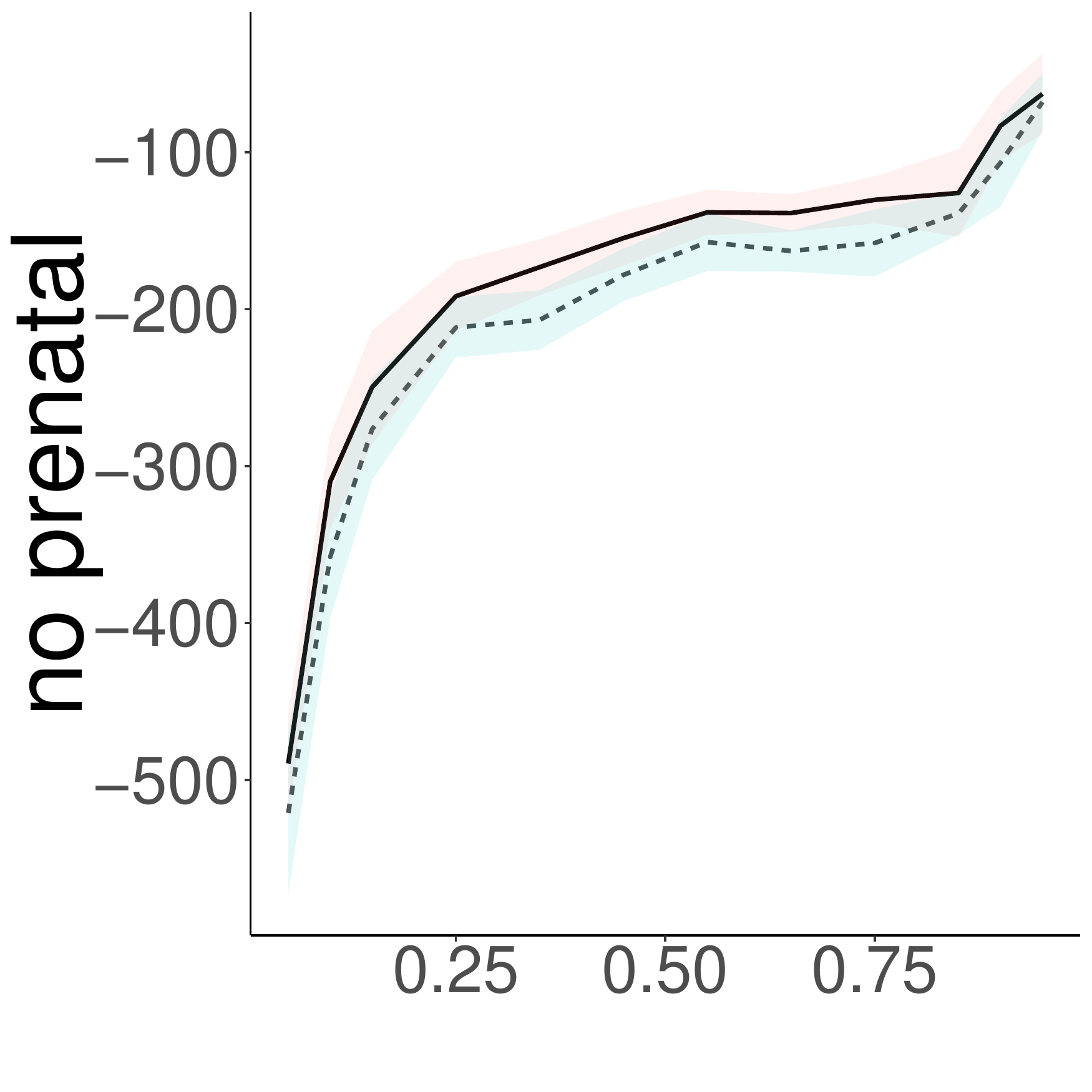} &
%\includegraphics[width=\pwidth\textwidth]{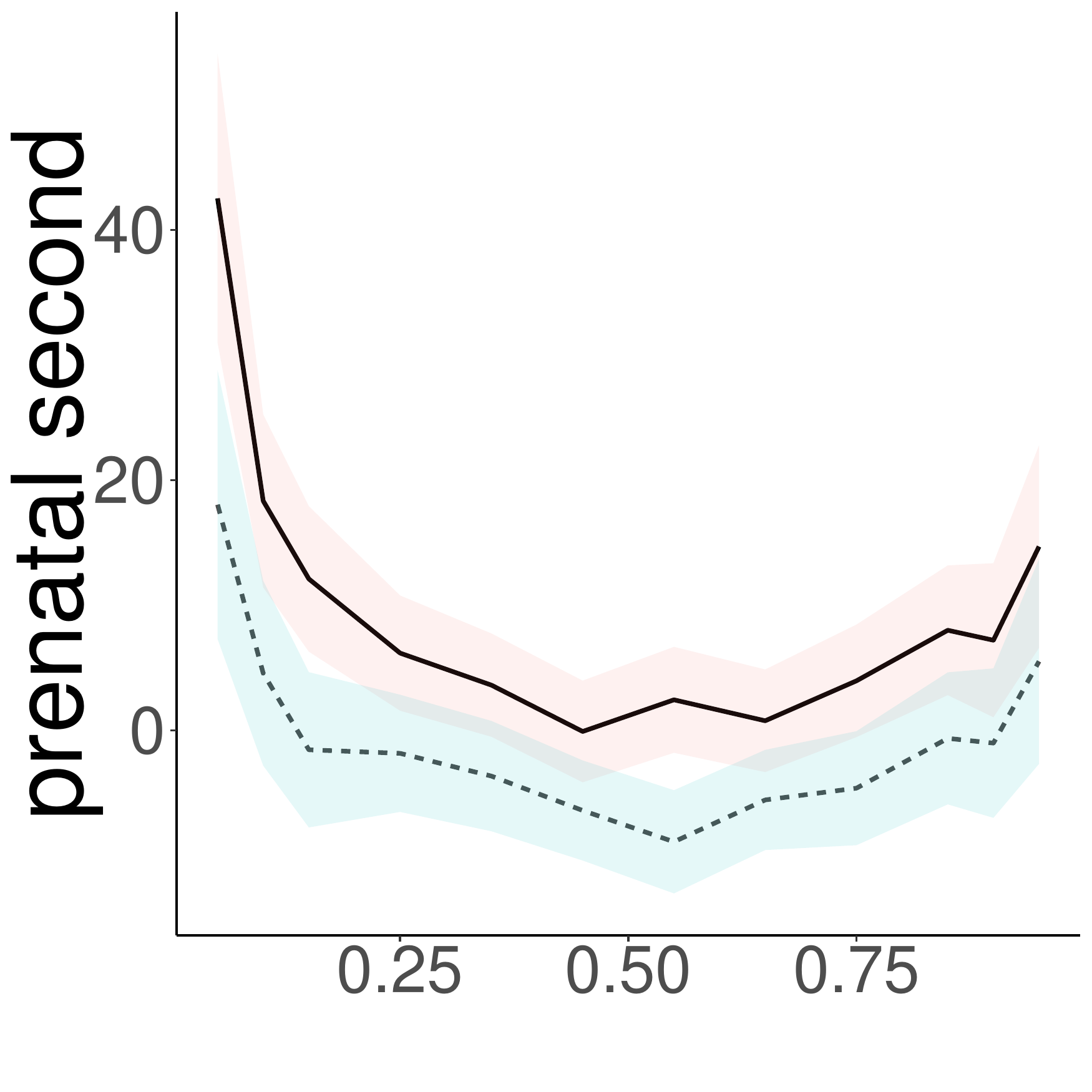} &
%\includegraphics[width=\pwidth\textwidth]{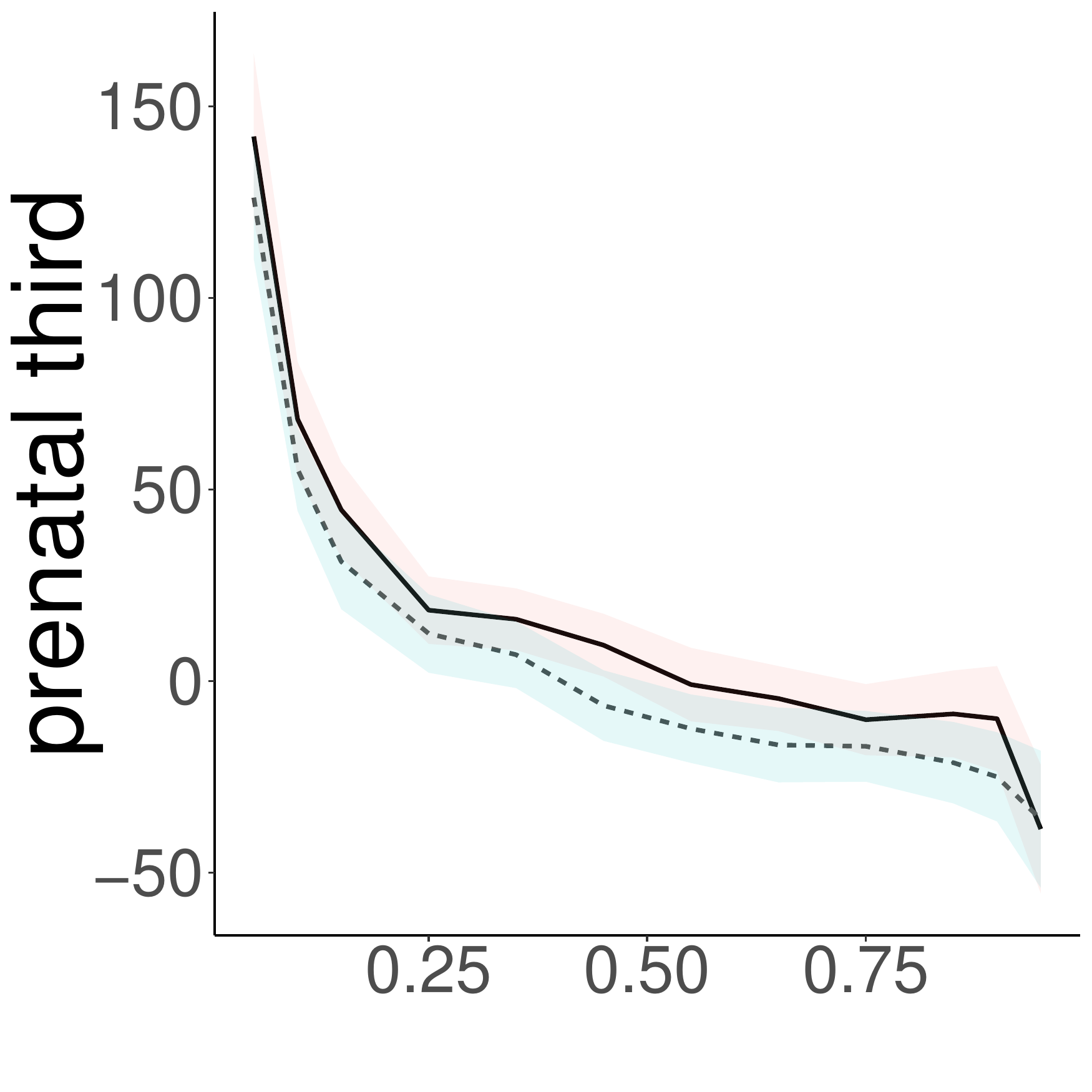} \\
%\includegraphics[width=\pwidth\textwidth]{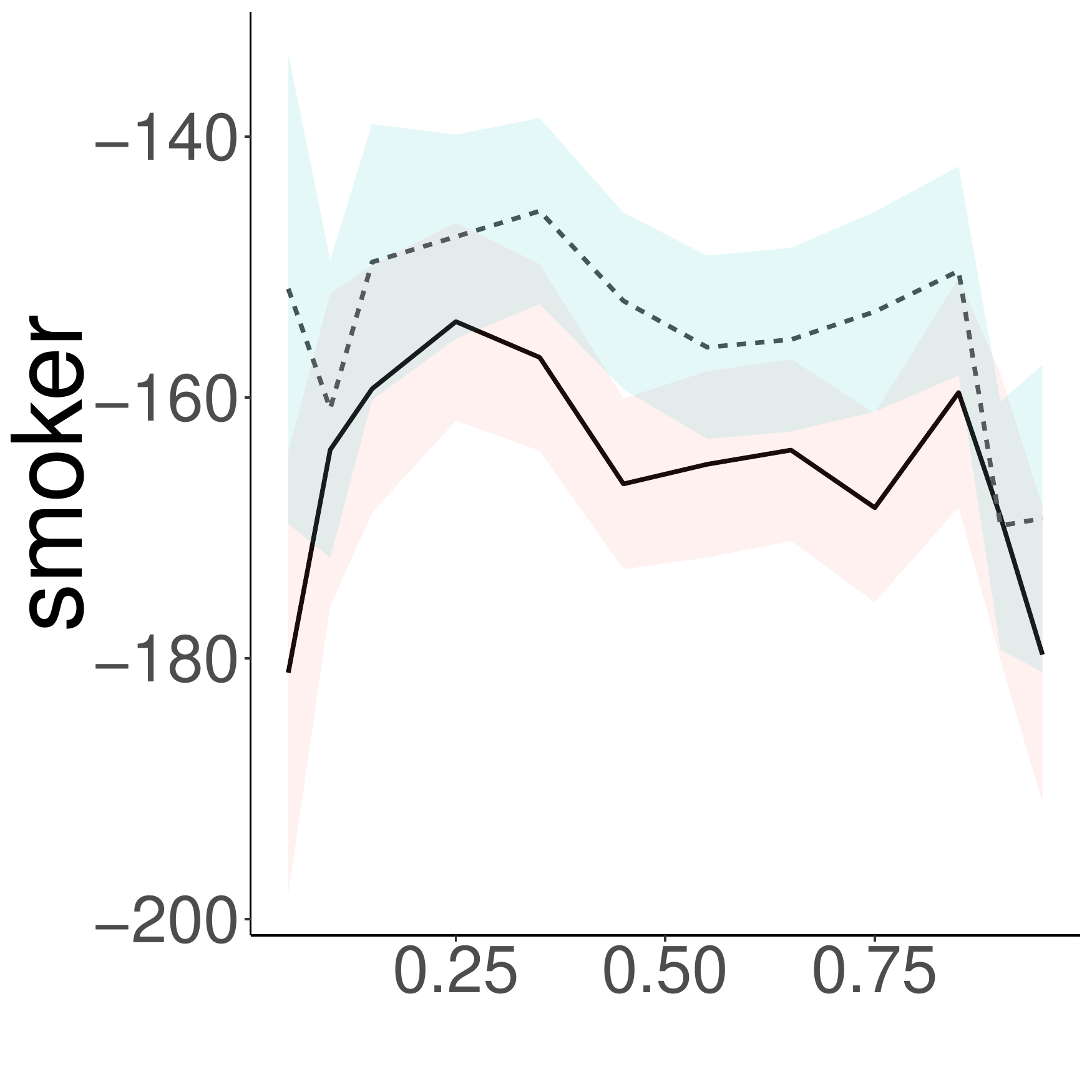} &
%\includegraphics[width=\pwidth\textwidth]{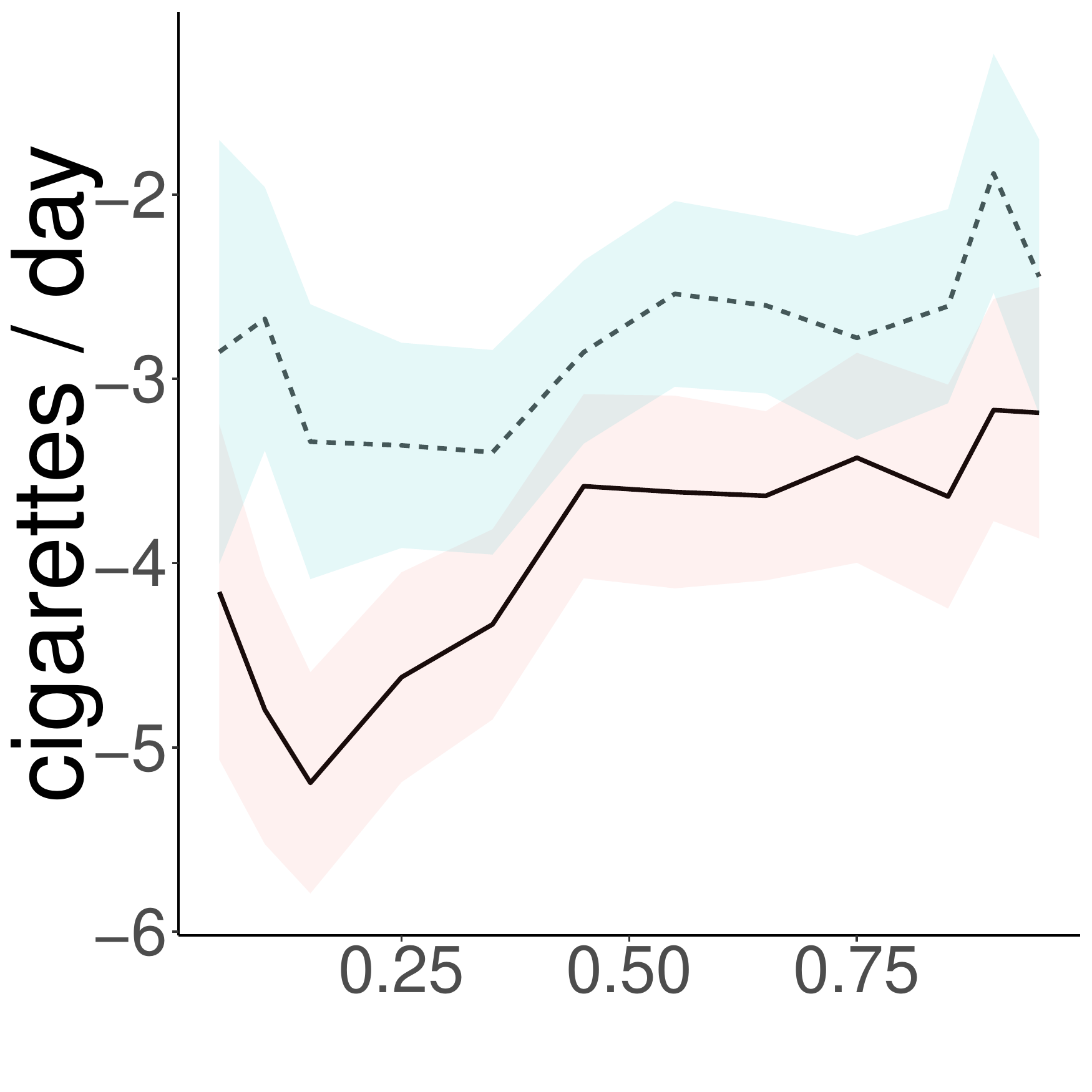} &
%\includegraphics[width=\pwidth\textwidth]{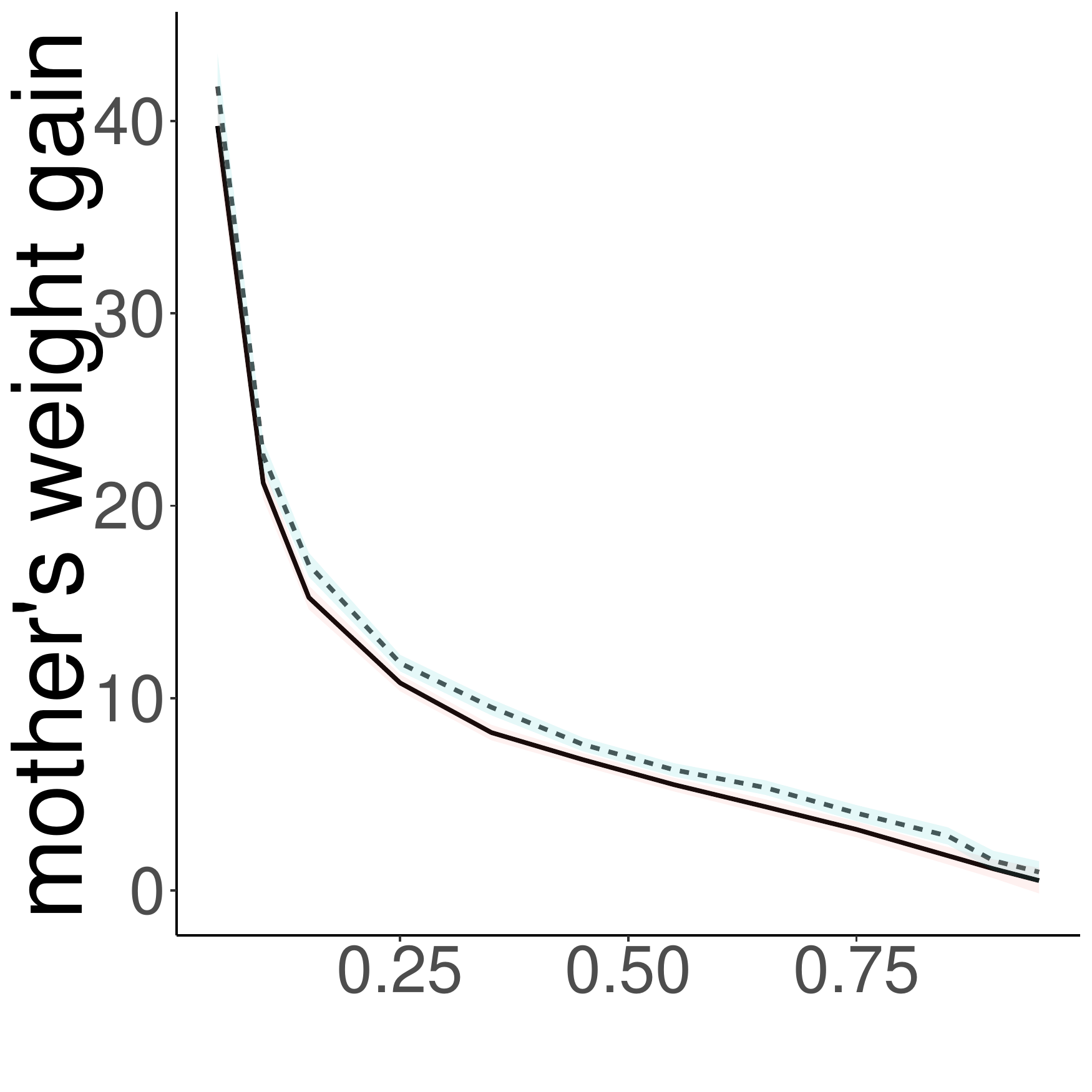} &
%\includegraphics[width=\pwidth\textwidth]{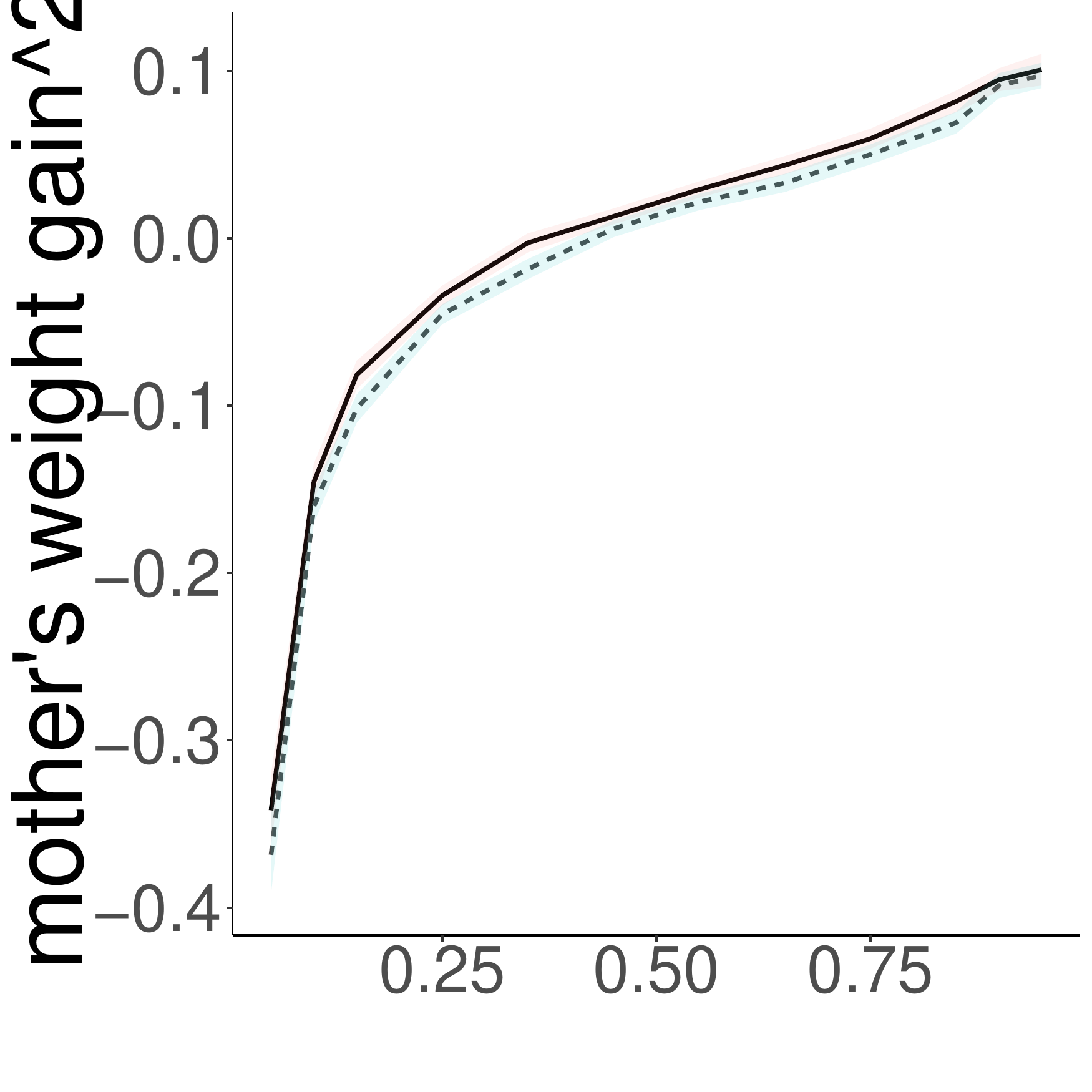}
%\end{tabular}
%\end{center}
%\caption{Quantile regression coefficients, including the protected
%  race variable $A$ (solid, salmon), and excluding $A$ (long dashed,
%  light blue). When the race variable $A$ is excluded, the variable ``married'' can be
% seen as serving as a kind of proxy.
% }
%\label{fig:all}
%\end{figure*}

\afterpage{
\begin{figure}[t!]
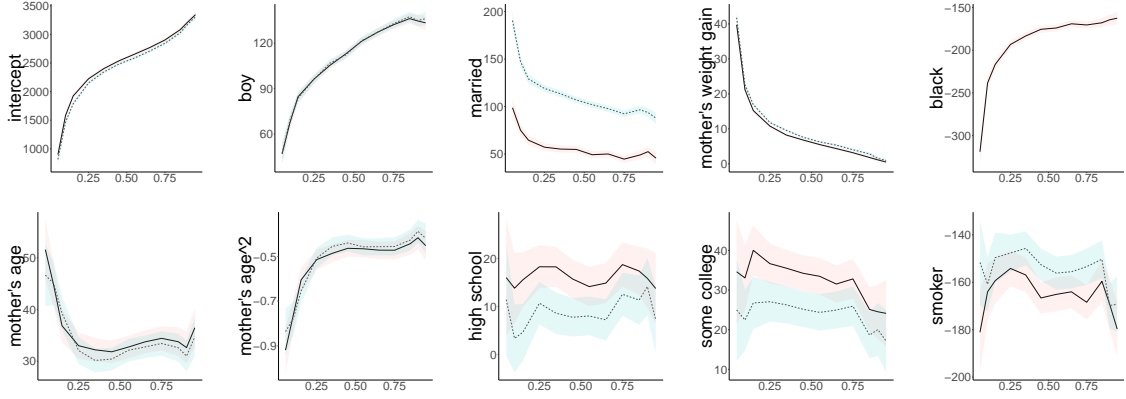

\begin{center}
\def\pwidth{.16}
\begin{tabular}{ccccc}
\includegraphics[width=\pwidth\textwidth]{fig/qrcoeff-w-wo-black-1} &
\includegraphics[width=\pwidth\textwidth]{fig/qrcoeff-w-wo-black-2} &
\includegraphics[width=\pwidth\textwidth]{fig/qrcoeff-w-wo-black-3} &
\includegraphics[width=\pwidth\textwidth]{fig/qrcoeff-w-wo-black-14} &
\includegraphics[width=\pwidth\textwidth]{fig/qrcoeff-w-wo-black-16} \\
\includegraphics[width=\pwidth\textwidth]{fig/qrcoeff-w-wo-black-4} &
\includegraphics[width=\pwidth\textwidth]{fig/qrcoeff-w-wo-black-5} &
\includegraphics[width=\pwidth\textwidth]{fig/qrcoeff-w-wo-black-6} &
\includegraphics[width=\pwidth\textwidth]{fig/qrcoeff-w-wo-black-7} &
\includegraphics[width=\pwidth\textwidth]{fig/qrcoeff-w-wo-black-12} \\
\end{tabular}
\end{center}
\caption{\small\it Quantile regression coefficients for birth data. The quantile $\tau$ runs along horizontal axis; curves are the coefficients $\hat \beta_\tau$; unit is grams. Solid/salmon: race is included in the model; dashed/blue: race excluded. When race is excluded, the strongly correlated variable ``married'' can be seen as serving as a kind of proxy.
 }
 \vskip-10pt
\label{fig:all}
\end{figure}
\begin{figure}[h!]
    \centering
    \begin{subfigure}[t]{0.35\textwidth}
        \centering
\begin{footnotesize}
  \begin{tabular}{| c || c | c | c | c | c |}
    \hline\hline
    target & $5$ & $25$ & $50$ & $75$  \\ \hline\hline
    $\hat{\tau}_0$(before) & $4.42$ & $23.14$ & $47.87$ & $73.12$ \\
    $\hat{\tau}_1$(before) & $7.91$ & $33.80$ & $60.02$ & $82.46$ \\
    $\hat{\tau}_0$(after) & $5.03$ & $25.01$ & $49.77$ & $74.61$ \\
    $\hat{\tau}_1$(after) & $5.02$ & $24.02$ & $49.95$ & $74.62$ \\
    \hline
  \end{tabular}
\end{footnotesize}
        \caption{Quantiles before and after adjustment.}
    \end{subfigure}%
    ~
    \begin{subfigure}[t]{0.7\textwidth}
        \centering
        \begin{tabular}{cc}
        \quad\quad\includegraphics[width=.35\textwidth]{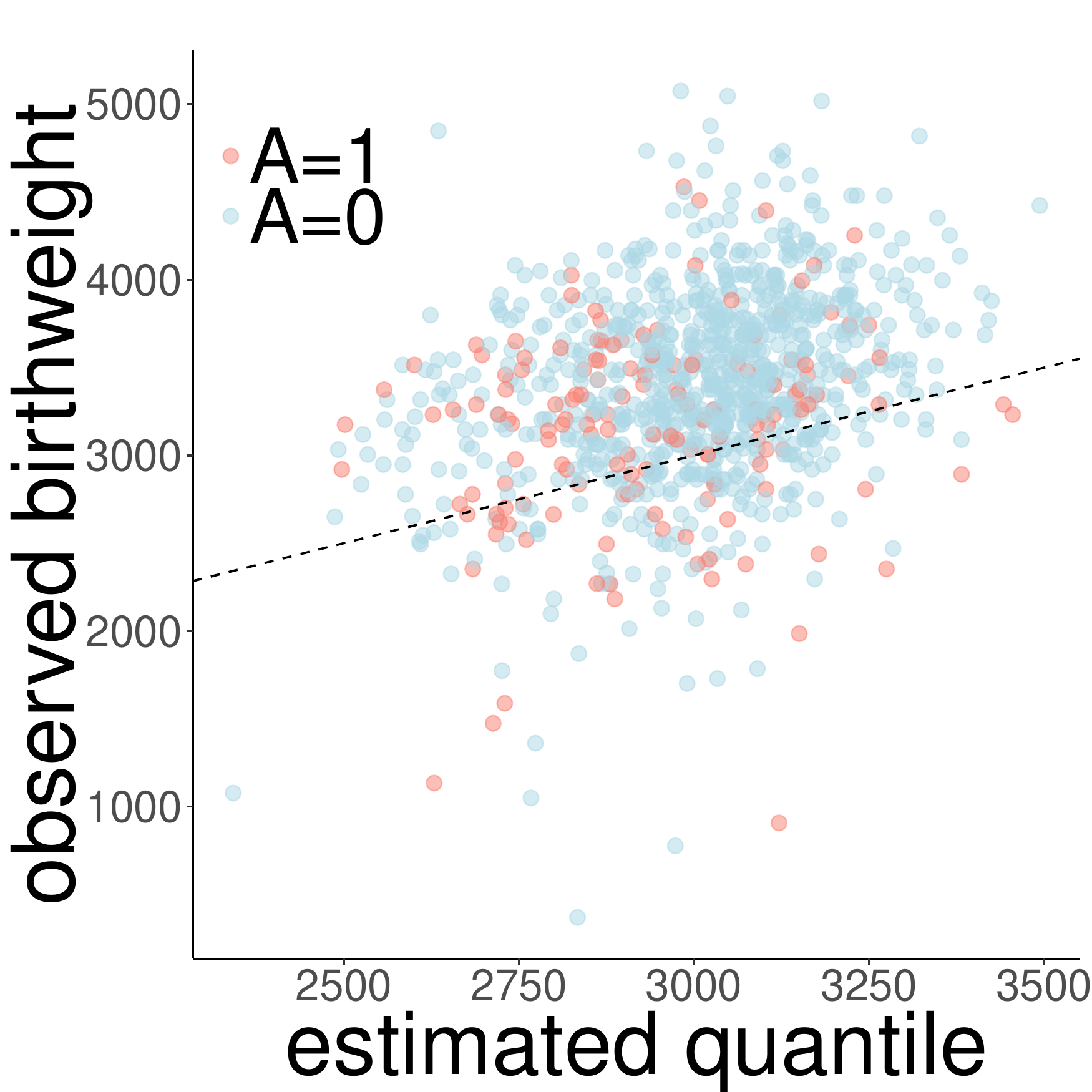} &
        \includegraphics[width=.35\textwidth]{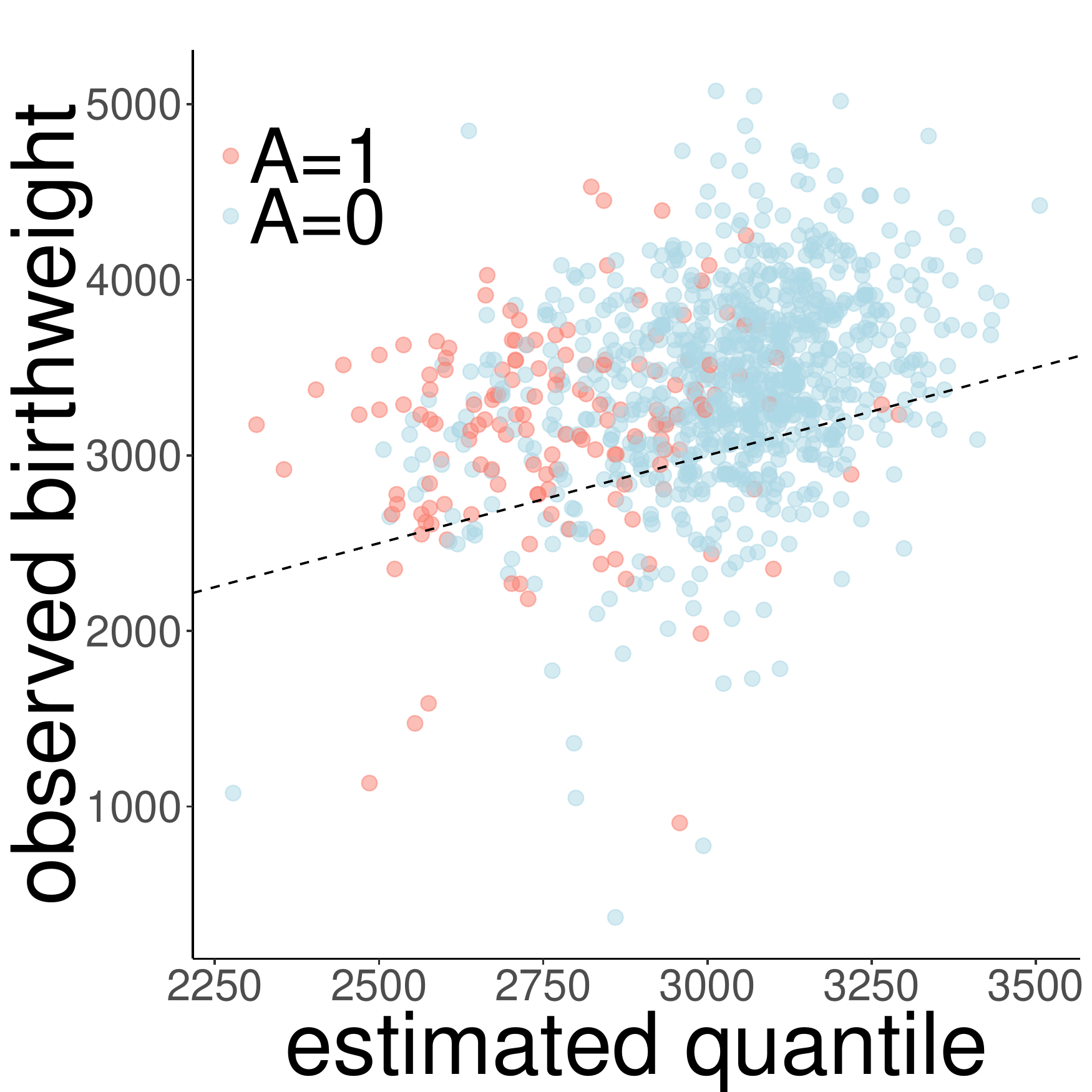}
        \end{tabular}
        \caption{Birth weights before and after adjustment.}
    \end{subfigure}
    \caption{\small \it Left: the effect of the adjustment procedure. Right: scatter plots of observed birth weights against estimated $20\%$ conditional quantiles over the test set, before and after adjustment.}
\label{fig:effective}
\end{figure}
}

The birth weight dataset from
\cite{abrevaya:01}, which is analyzed by
\cite{koenker2001quantile}, includes the weights
of 198,377 newborn babies, and other attributes of the babies
and their mothers, such as the baby's gender,
whether or not the mother is married, and the mother's age. One of the
attributes includes information about the race of the mother, which we treat as the
protected attribute $A$. The variable $A$ is binary---black ($A=1$) or
not black ($A=0$). The birth weight is reported in grams. The other attributes include education of the mother,
prenatal medical care, an indicator of whether the mother smoked during pregnancy,
and the mother's reported weight gain during pregnancy.

Figure~\ref{fig:all} shows the coefficients $\widehat{\beta}_\tau$ obtained
by fitting a linear quantile regression model, regressing birth weight
on all other attributes.  The model is fit two ways,
either including the protected
race variable $A$ (solid, salmon confidence bands), or excluding $A$ (long dashed,
light blue confidence bands).
The top-right figure shows that babies of black mothers weigh less on
average, especially near the lower quantiles where they weigh nearly
300 grams less compared to babies of nonblack mothers.  A description
of other aspects of this linear model is given by \cite{koenker2001quantile}.
A  striking aspect of the plots is the disparity between birth weights of infants born to black and nonblack
mothers, especially at the left tail of the
distribution. In particular, at the 5th percentile of the conditional distribution,
the difference is more than 300 grams. Just as striking is the
observation that when the race attribute $A$ is excluded from the model, the variable
``married,'' with which it has a strong negative correlation, effectively serves
as a proxy, as seen by the upward shift in its regression
coefficients. However, this and the other variables do not completely
account for race, and as a result the model overestimates the weights
of infants born to black mothers, particularly at the lower
quantiles.

To correct for the unfairness of $\widehat{q}_\tau$, we apply the correction procedure described in Section~\ref{sec:results}. For the target quantile $\tau=20\%$, the corrected estimator $\tilde{q}_\tau$ achieves effective quantiles $20.4\%$ for the black population and $20.1\%$ for the nonblack population. Table~\ref{fig:effective} (left) shows the effective quantiles at a variety of quantile levels. We see that the correction procedure consistently pulls the effective quantiles for both subpopulations closer to the target quantiles.

For 1000 randomly selected individuals from the test set, Figure~\ref{fig:effective} (right) shows their observed birth weights plotted against the conditional quantile estimation at $\tau=20\%$ before (left) and after (right) the correction. The dashed line is the identity. When $A$ is not included in the quantile regression, the conditional quantiles for the black subpopulation are overestimated. Our procedure achieves fairness correction by shifting the estimates for the $A_i=1$ data points smaller (to the left) and shifting the $A_i=0$ data points larger (to the right). After the correction, the proportion of data points that satisfy $Y\leq\tilde{q}_\tau$ are close to the target $20\%$ for both subpopulations.

% !TEX root = main.tex

\section{Discussion}

In this paper we have studied the effects of excluding a distinguished
attribute from quantile regression estimates, together with procedures
to adjust for the bias in these estimates through post-processing.
The linear programming basis for quantile regression
leads to properties and analyses that complement what has
appeared previously in the fairness literature.
Several extensions of the work presented here
should be addressed in future work. For example, the generality of
the concentration result of Lemma~\ref{lmm:emp} could allow the extension
of our results to multiple attributes of different types. In the fairness
analysis in Section~\ref{sec:proof} we used a linear quantile regression in the adjustment step, which
allows us to more easily leverage previous statistical analyses
\cite{gutenbrunner1992regression} on quantile rank scores. Nonparametric
methods would be another interesting direction to explore.

The birth data studied here has been instrumental in developing our
thinking on fairness for quantile regression. It will be interesting
to investigate the ideas introduced here for other data sets. If the tail
behaviors, including outliers, of the conditional distributions for a
set of subpopulations are very different, and the identification of
those subpopulations is subject to privacy restrictions or other
constraints that do not reveal them in the data, the issue of bias in
estimation and decision making will come into play.

% !TEX root = main.tex

\vskip20pt
\section{Proofs}\label{sec:pop}

\begin{proof}[Proof of Lemma~\ref{lmm:emp}]
To prove the lemma we first transform the problem into bounding the tail of a Rademacher process via a symmetrization technique. Let $\epsilon_i$ be distributed {\it i.i.d.} Rademacher ($\mathbb{P}\{\epsilon_i=1\}=\mathbb{P}\{\epsilon_i=-1\}=1/2$). Write $P_n$ for the empirical (probability) measure that puts mass $n^{-1}$ at each $X_i$. We claim that for all $t>2\sqrt{2}C=:C_1$,
\begin{equation}
\label{eq:symm}
\mathbb{P}\left\{\sqrt{n}\sup_{f\in\mathcal{F}}\left|\int f dP_n-\int f dP\right|>t\right\}\leq
4\mathbb{P}\left\{\sqrt{n}\sup_{f\in \mathcal{F}}\left|\frac{1}{n}\sum_{i\leq n}\epsilon_i f(X_i)\right|>\frac{t}{4}\right\}.
\end{equation}
{\bf Proof of~\eqref{eq:symm}}: Let $\tilde{X}_1,...,\tilde{X}_n$ be independent copies of $X_1,..., X_n$ and let $\tilde{P}_n$ be the corresponding empirical measure. Define events
\[
\mathcal{A}_f = \left\{\sqrt{n}\left|\int f dP_n- \int f dP\right|>t\right\};\;\;\;\text{and }\mathcal{B}_f=\left\{\sqrt{n}\left|\int f d\tilde{P}_n-\int f dP\right|\leq \frac{t}{2}\right\}.
\]
For all $t>C_1$,
\[
\mathbb{P}\mathcal{B}_f= 1-\mathbb{P}\left\{\sqrt{n}\left|\int f d\tilde{P}_n-\int f dP\right|>\frac{t}{2}\right\}
\geq 1-\frac{\text{Var} f(X_1)}{(t/2)^2}
\geq 1-\frac{\int F^2 dP}{(t/2)^2}
\geq \frac{1}{2}.
\]
On the other hand, because $\mathcal{F}$ is countable, we can always find mutually exclusive events $\mathcal{D}_f$ for which
\[
\mathbb{P}\cup _{f\in\mathcal{F}}\mathcal{A}_f=\mathbb{P}\cup _{f\in\mathcal{F}}\mathcal{D}_f = \sum_{f\in\mathcal{F}}\mathbb{P}\mathcal{D}_f.
\]
Since $2\mathbb{P}\mathcal{B}_f\geq 1$ for all $f$, the above is upper bounded by $2\sum_{f\in\mathcal{F}}\mathbb{P}\mathcal{D}_f\mathbb{P} \mathcal{B}_f$. From independence of $X$ and $\tilde{X}$, it can be rewritten as
\[
2\sum_{f\in\mathcal{F}}\mathbb{P}(\mathcal{D}_f\cap \mathcal{B}_f)=2\mathbb{P} \cup_{f\in\mathcal{F}}(\mathcal{D}_f\cap \mathcal{B}_f)\leq 2\mathbb{P}\cup_{f\in\mathcal{F}}(\mathcal{A}_f \cap \mathcal{B}_f),
\]
which is no greater than 2$\mathbb{P}\{\sqrt{n}\sup_f |\int f dP_n-\int f d\tilde{P}_n|>t/2\}$ since
\[
\mathcal{A}_f\cap \mathcal{B}_f\subset \left\{\sqrt{n}\left|\int f dP_n-\int f d\tilde{P}_n\right|>t/2\right\}.
\]

Because $\tilde{X}_i$ is an independent copy of $X_i$, by symmetry $f(X_i)-f(\tilde{X}_i)$ and $\epsilon_i (f(X_i)-f(\tilde{X}_i))$ are equal in distribution. Therefore
\begin{align*}
& \mathbb{P}\left\{\sqrt{n}\sup_{f\in\mathcal{F}}\left|\int f dP_n-\int f dP\right|>t\right\}\\
= & \mathbb{P}\cup _{f\in\mathcal{F}}\mathcal{A}_f \leq 2\mathbb{P}\left\{\sqrt{n}\sup_{f\in\mathcal{F}}\left|\frac{1}{n}\sum_{i\leq n}\epsilon_i (f(X_i)-f(\tilde{X}_i))\right|>\frac{t}{2}\right\}\\
\leq & 2\mathbb{P}\left\{\sqrt{n}\sup_{f\in\mathcal{F}}\left|\frac{1}{n}\sum_{i\leq n}\epsilon_if(X_i)\right|>\frac{t}{4}\right\} + 2\mathbb{P}\left\{\sqrt{n}\sup_{f\in\mathcal{F}}\left|\frac{1}{n}\sum_{i\leq n}\epsilon_if(\tilde{X}_i)\right|>\frac{t}{4}\right\}\\
= & 4\mathbb{P}\left\{\sqrt{n}\sup_{f\in\mathcal{F}}\left|\frac{1}{n}\sum_{i\leq n}\epsilon_if(X_i)\right|>\frac{t}{4}\right\}.
\end{align*}
That concludes the proof of~\eqref{eq:symm}.

Denote as $Z_n(f)$ the Rademacher process $n^{-1/2}\sum \epsilon_i f(X_i)$. Let $\mathbb{P}_X$ be the probability measure of $\epsilon$ conditioning on $X$. By independence of $\epsilon$ and $X$, $\epsilon_i$ is still Rademacher under $\mathbb{P}_X$, and it is sub-Gaussian with parameter 1. This implies that for all $f,g\in\mathcal{F}$, $Z_n(f)-Z_n(g)$ is sub-Gaussian with parameter$\sqrt{\int (f-g)^2 dP_n}$ under $\mathbb{P}_X$. In other words,
\[
\mathbb{P}_X\left\{\left| Z_n(f)-Z_n(g)\right|>2\sqrt{\int (f-g)^2 dP_n}\sqrt{u}\right\}\leq 2e^{-u},\;\;\;\forall u>0.
\]
We have shown that conditioning on $X$, $Z_n(f)$ is a process with sub-Gaussian increments controlled by the $\mathcal{L}^2$ norm with respect to $P_n$. For brevity write $\|f\|$ for $\sqrt{\int f^2 dP_n}$. Apply Theorem 3.5 in~\cite{dirksen2015tail} to deduce that there exists positive constant $C_3$, such that for all $f_0\in\mathcal{F}$,
\begin{equation}
\label{eq:chaining}
\mathbb{P}_X\left\{\sup_{f\in\mathcal{F}}\left|Z_n(f)-Z_n(f_0)\right|\geq C_3\left(\Delta(\mathcal{F},\|\cdot\|)\sqrt{u}+\gamma_2(\mathcal{F},\|\cdot\|)\right)\right\}\leq e^{-u}\;\;\;\forall u\geq 1,
\end{equation}
where $\Delta(\mathcal{F},\|\cdot\|)$ is the diameter of $\mathcal{F}$ under the metric $\|\cdot\|$, and $\gamma_2$ is the generic chaining functional that satisfies
\[
\gamma_2(\mathcal{F},\|\cdot\|)\leq C_4 \int_0^{\Delta(\mathcal{F},\|\cdot\|)} \sqrt{\log N(\mathcal{F},\|\cdot\|,\delta)}d\delta
\]
for some constant $C_4$. Here $N(\mathcal{F},\|\cdot\|,\delta)$ stands for the $\delta$-covering number of $\mathcal{F}$ under the metric $\|\cdot\|$. We should comment that the generic chaining technique by~\cite{dirksen2015tail} is a vast overkill for our purpose. With some effort the large deviation bounds we need can be derived using the classical chaining technique.

Because $|f|\leq F$ for all $f\in\mathcal{F}$, we have $\Delta(\mathcal{F},\|\cdot\|)\leq 2\|F\|$, so that
\begin{align}
\nonumber & \int_0^{\Delta(\mathcal{F},\|\cdot\|)} \sqrt{\log N(\mathcal{F},\|\cdot\|,\delta)}d\delta\\
\label{eq:c.o.v.} \leq & \int_0^{2\|F\|} \sqrt{\log N(\mathcal{F},\|\cdot\|,\delta)}d\delta
= 2\|F\|\int_0^1 \sqrt{\log N(\mathcal{F},\|\cdot\|,2\delta\|F\|)}d\delta
\end{align}
via change of variables. To bound the covering number, invoke the assumption that $\text{Subgraph}(\mathcal{F})$ is a VC class of sets. Suppose the VC dimension of $\text{Subgraph}(\mathcal{F})$ is $V$. By Lemma 19 in~\cite{nolan1987u}, there exists positive constant $C_5$ for which the $\mathcal{L}^1(Q)$ covering numbers satisfy
\[
N\left(\mathcal{F},\mathcal{L}^1(Q),\delta \int F dQ\right)\leq (C_5/\delta)^V
\]
for all $0<\delta\leq 1$ and any $Q$ that is a finite measure with finite support on $\mathcal{X}$. Choose $Q$ by $dQ/dP_n=F$. Choose $f_1,..., f_N\in\mathcal{F}$ with $N=N(\mathcal{F},\mathcal{L}^1(Q),\delta \int F dQ)$ and $\min_i \int |f-f_i| dQ \leq \delta \int F dQ$ for each $f\in\mathcal{F}$. Suppose $f_i$ achieves the minimum. Since $F$ is an envelope function for both $f$ and $f_i$,
\[
\int \left|f-f_i\right|^2 dP_n \leq \int 2F \left|f-f_i\right| dP_n,
\]
which by definition of $Q$, is equal to
\[
2\int \left|f-f_i\right| dQ\leq 2\delta \int F dQ = 2\delta\int F^2 dP_n.
\]
Take square roots on both sides to deduce that
\[
N\left(\mathcal{F},\|\cdot\|, 2\delta \|F\|\right)\leq (C_5/\delta^2)^V.
\]
Plug into~\eqref{eq:c.o.v.} this upper bound on the covering number to deduce that the integral in~\eqref{eq:c.o.v.} converges, and $\gamma_2(\mathcal{F},\|\cdot\|)$ is no greater than a constant multiple of $\|F\|$. Recall that we also have $\Delta(\mathcal{F},\|\cdot\|)\leq 2\|F\|$. From~\eqref{eq:chaining}, there exists positive constant $C_6$ for which
\[
\mathbb{P}_X\left\{\sup_{f\in\mathcal{F}}\left|Z_n(f)-Z_n(f_0)\right|\geq C_6 \|F\|\left(\sqrt{u}+1\right)\right\}\leq e^{-u}\;\;\;\forall u\geq 1.
\]
Take $f_0=0$ so we have $Z_n(f_0)=0$. If the zero function does not belong in $\mathcal{F}$, including it in $\mathcal{F}$ does not disrupt the VC set property, and all previous analysis remains valid for $\mathcal{F}\cup \{0\}$. Letting $u=(t/4C_6 \|F\|-1)^2$ yields
\[
\mathbb{P}_X \left\{\sup_{f\in\mathcal{F}}|Z_n(f)|>\frac{t}{4}\right\}\leq \exp\left(-\left(\frac{t}{4C_6\|F\|}-1\right)^2\right),\;\;\; \forall t\geq 8C_6 \|F\|.
\]
Under $\mathbb{P}$, $\|F\|$ is no longer deterministic. Divide the probability space according to the event $\{t\geq 8C_6 \|F\|\}$:
\begin{align*}
\mathbb{P}\left\{\sup_{f\in\mathcal{F}}|Z_n(f)|>\frac{t}{4}\right\}
\leq &\mathbb{E}\mathds{1}\{t\geq 8C_6 \|F\|\}\mathbb{P}_X \left\{\sup_{f\in\mathcal{F}}|Z_n(f)|>\frac{t}{4}\right\} + \mathbb{P}\{t< 8C_6 \|F\|\}\\
\leq &\mathbb{E}\mathds{1}\{t\geq 8C_6 \|F\|\}\exp\left(-\left(\frac{t}{4C_6\|F\|}-1\right)^2\right)+
\mathbb{P}\{t< 8C_6 \|F\|\}.
\end{align*}
Choose $C_2=4C_6$ and~\eqref{eq:emp.lemma} follows.

\end{proof}

\begin{proof}[Proof of Theorem~\ref{thm:fair}]
Recall that $\text{Cov}_F(a,\{y>\tilde{q}_\tau\})=\overline{W}_{A-\mathbb{E}A}(\hat{\mu}_\tau,\hat{\nu}_\tau)$. Therefore
\begin{align}
\nonumber& \sup_\tau\left|\text{Cov}_F\left(a,\{y>\tilde{q}_\tau(a,x)\}\right)\right|\\
\nonumber= & \sup_\tau\left|\overline{W}_{A-\mathbb{E}A}(\hat{\mu}_\tau,\hat{\nu}_\tau)\right|\\
\label{eq:triangle}\leq & \sup_{\mu,\nu\in\mathbb{R}}\left|\left(W_{A-\mathbb{E}A}(\mu,\nu)-\overline{W}_{A-\mathbb{E}A}(\mu,\nu)\right)\right|+ \sup_\tau\left|W_{A-\mathbb{E}A}\left(\hat{\mu}_\tau,\hat{\nu}_\tau\right)\right|.
\end{align}
Use Lemma~\ref{lmm:emp} to control the tail of the first term. Apply Lemma~\ref{lmm:emp} with
\[
\mathcal{F}=\left\{f: (a,r)\mapsto (a-\mathbb{E}a)\mathds{1}\{r>\mu a+\nu\}: \mu,\nu\in\mathbb{Q}\right\}.
\]
Note that we are only allowing $\mu,\nu$ to take rational values because Lemma~\ref{lmm:emp} only applies to countable sets of functions. This restriction will not hurt us because the supremum of the $W$ processes over all $\mu,\nu\in\mathbb{R}$ equals the supremum over all $\mu,\nu\in\mathbb{Q}$. Let $F(a,r)=|a-\mathbb{E}a|$ be the envelope function. We need to check that $\text{Subgraph}(\mathcal{F})$ is a VC class of sets.
\begin{align}
\nonumber\text{Subgraph}(\mathcal{F}) =& \left\{\{(a,r,t): (a-\mathbb{E}a)\mathds{1}\{r>\mu a+\nu\}\leq t\} : \mu,\nu\in\mathbb{R}\right\}\\
\label{eq:vc.check}= & \left\{\{(a,r,t): \left(\{r>\mu a+\nu\}\cap \{a-\mathbb{E}a\leq t\}\right)\cup \left(\{r\leq \mu a+\nu\}\cap \{t\geq 0\}\right)\} : \mu,\nu\in\mathbb{Q}\right\}.
\end{align}
Since half spaces in $\mathbb{R}^2$ are of VC dimension 3~\cite[p~221]{alon2004probabilistic}, the set $\{\{r\leq\mu a+\nu\}:\mu,\nu\in\mathbb{Q}\}$ forms a VC class. By the same arguments all four events in~\eqref{eq:vc.check} form VC classes. Deduce that $\text{Subgraph}(\mathcal{F})$ is also a VC class because the VC property is stable under any finitely many union/intersection operations. The assumptions of Lemma~\ref{lmm:emp} are satisfied, which gives that for all $t\geq 2C_1/\sqrt{n}$,
\begin{align*}
&\mathbb{P}\left\{\sup_{\mu,\nu\in\mathbb{R}}\left|W_{A-\mathbb{E}A}(\mu,\nu)-\overline{W}_{A-\mathbb{E}A}(\mu,\nu)\right|>\frac{t}{2}\right\}\\
\leq & 4\mathbb{E}\exp\left(-\left(\frac{nt}{2C_2|A-\mathbb{E}A|}-1\right)^2\right)+4\mathbb{P}\left\{2|A-\mathbb{E}A|>nt/2C_2\right\}\\
\leq & 4\exp\left(-\left(\frac{\sqrt{n}t}{2C_2 u}-1\right)^2\right)+4\mathbb{P}\left\{|A-\mathbb{E}A|>\sqrt{n}u\right\}+4\mathbb{P}\left\{2|A-\mathbb{E}A|>nt/2C_2\right\},\;\;\;\forall u>0.
\end{align*}
Here $|\cdot|$ denotes the Euclidean norm in $\mathbb{R}^n$. Under the assumption that $A_i$ has finite second moment, we could pick $u$ to be a large enough constant, and pick $t$ to be a large enough constant multiple of $1/\sqrt{n}$ to make the above arbitrarily small. In other words,
\[
\sup_{\mu,\nu\in\mathbb{R}}\left|W_{A-\mathbb{E}A}(\mu,\nu)-\overline{W}_{A-\mathbb{E}A}(\mu,\nu)\right|=O_p\left(\frac{1}{\sqrt{n}}\right).
\]
Under the stronger assumption that $A_i-\mathbb{E}A_i$ is sub-Gaussian, we have that $(A_i-\mathbb{E}A_i)^2-\text{Var}(A_i)$ is sub-exponential. Choose $u$ to be a large enough constant and we have
\[
\mathbb{P}\left\{|A-\mathbb{E}A|>\sqrt{n}u\right\}= \mathbb{P}\left\{\frac{1}{\sqrt{n}}\sum_{i\leq n}(A_i-\mathbb{E}A_i)^2>\sqrt{n}u^2\right\}\leq \exp(-C_4(\sqrt{n}u^2-1)).
\]
Similarly if $t>C_1/\sqrt{n}$ for some large enough constant $C_1$, there exists $C_5>0$ such that
\[
\mathbb{P}\left\{2|A-\mathbb{E}A|>nt/2C_2\right\}\leq \exp\left(-C_5(nt^2-1)\right).
\]
Organizing all the terms yields for some positive constants $C,C_1,C_2,C_3$ whose values may have changed from previous lines,
\[
\mathbb{P}\left\{\sup_{\mu,\nu\in\mathbb{R}}\left|W_{A-\mathbb{E}A}(\mu,\nu)-\overline{W}_{A-\mathbb{E}A}(\mu,\nu)\right|>\frac{t}{2}\right\}\leq C\left(\exp\left(-C_2 nt^2\right)+\exp\left(-C_3 \sqrt{n}\right)\right),\;\;\;\forall t>C_1/\sqrt{n}.
\]
For the second term of~\eqref{eq:triangle}, write
\[
W_{A-\mathbb{E}A}\left(\hat{\mu}_\tau,\hat{\nu}_\tau\right) = \frac{1}{n}\sum_{i\leq n}\left(A_i-\mathbb{E}A_i\right)\left\{R_i>\hat{\mu}_\tau A_i+\hat{\nu}_\tau\right\}.
\]
By the dual form of quantile regression~\cite[p~308]{gutenbrunner1992regression}, there exists regression rank scores $b_\tau\in [0,1]^n$ such that
\[
A^T b=(1-\tau)A^T\mathbbm{1},\;\;\; \mathbbm{1}^T b=(1-\tau)n,\;\;\;\text{and}
\]
\[
b_{\tau,i} =\mathbbm{1}\{R_i>\hat{\mu}_\tau A_i+\hat{\nu}_\tau\},\;\;\;\forall i\notin M_\tau,
\]
for some $M_\tau\subset[n]$ of size at most $p$. As a result,
\begin{align*}
& \sup_\tau\left|W_{A-\mathbb{E}A}\left(\hat{\mu}_\tau,\hat{\nu}_\tau\right)\right|\\
\leq & \frac{1}{n}\sup_\tau \left|A^T b_\tau-\mathbb{E}A_1 \frac{1}{n}\mathbbm{1}^T b_\tau\right|+\frac{1}{n}\sup_\tau\left|\sum_{i\in M_\tau}(A_i-\mathbb{E}A_i)\left(b_{\tau,i}-\{R_i>\hat{\mu}_\tau A_i+\hat{\nu}_\tau\}\right)\right|\\
= & \frac{1}{n}\sup_\tau\left|(1-\tau)A^T\mathbbm{1}-\mathbb{E}A_1 (1-\tau)n\right|+\frac{1}{n}\sup_\tau\left|\sum_{i\in M_\tau}(A_i-\mathbb{E}A_i)\left(b_{\tau,i}-\{R_i>\hat{\mu}_\tau A_i+\hat{\nu}_\tau\}\right)\right|\\
\leq & \left|\frac{1}{n}\sum_{i\leq n}(A_i-\mathbb{E}A_i)\right|+\frac{p}{n}\max_{i\leq n}|A_i-\mathbb{E}A_i|.
\end{align*}

If $A_i$ has finite second moment, the above is clearly of order $O_p(1/\sqrt{n})$. If we have in addition that $A_i-\mathbb{E}A_i\sim \mbox{SubG}(\sigma)$, then $|\sum_i (A_i-\mathbb{E}A_i)/n|\sim \mbox{SubG}(\sigma/\sqrt{n})$. For all $t>0$,
\[
\mathbb{P}\left\{\left|\frac{1}{n}\sum_{i\leq n}(A_i-\mathbb{E}A_i)\right|>\frac{t}{4}\right\}\leq 2\exp\left(-\frac{nt^2}{32\sigma^2}\right).
\]
We also have
\[
\mathbb{P}\left\{\frac{p}{n}\max_{i\leq n}|A_i-\mathbb{E}A_i|>\frac{t}{4}\right\}\leq n\mathbb{P}\left\{|A_1-\mathbb{E}A_1|>\frac{tn}{4p}\right\}\leq 2\exp\left(-\frac{n^2t^2}{32\sigma^2p^2}+\log n\right).
\]
Hence
\begin{align*}
&\mathbb{P}\left\{\sup_\tau\left|\text{Cov}_F\left(a,\{y>\tilde{q}_\tau(a,x)\}\right)\right|>t\right\}\\
\leq & \mathbb{P}\left\{\sup_{\mu,\nu\in\mathbb{R}}\left|\left(W_{A-\mathbb{E}A}(\mu,\nu)-\overline{W}_{A-\mathbb{E}A}(\mu,\nu)\right)\right|>\frac{t}{2}\right\}\\
&  + \mathbb{P}\left\{\left|\frac{1}{n}\sum_{i\leq n}(A_i-\mathbb{E}A_i)\right|>\frac{t}{4}\right\} + \mathbb{P}\left\{\frac{p}{n}\max_{i\leq n}|A_i-\mathbb{E}A_i|>\frac{t}{4}\right\}\\
\leq &C\left(\exp\left(-C_2 nt^2\right)+\exp\left(-C_3 \sqrt{n}\right)\right)+ 2\exp\left(-\frac{nt^2}{32\sigma^2}\right)+2\exp\left(-\frac{n^2t^2}{32\sigma^2p^2}+\log n\right)\\
\leq & C\left(\exp\left(-C'_2 nt^2\right)+\exp\left(-C_3 \sqrt{n}\right)+n\exp\left(-C_4 n^2t^2\right)\right).
\end{align*}
That concludes the proof of~\eqref{eq:fair}. The proof of~\eqref{eq:faithful} is similar. Simply note that
\begin{align*}
& \sup_\tau\left|E_F\{y>\tilde{q}_\tau(a,x)\}-(1-\tau)\right|\\
= & \sup_\tau \left|\overline{W}_{\mathds{1}}(\hat{\mu}_\tau,\hat{\nu}_\tau)-(1-\tau)\right|\\
\leq & \sup_{\mu,\nu}\left|W_{\mathds{1}}(\mu,\nu)-\overline{W}_{\mathds{1}}(\mu,\nu)\right|+ \sup_\tau \left|\frac{1}{n}\sum_{i\in M_\tau}\left(b_{\tau,i}-\mathds{1}\{R_i>\hat{\mu}_\tau A_i+\hat{\nu}_\tau\}\right)\right|\\
\leq & \sup_{\mu,\nu}\left|W_{\mathds{1}}(\mu,\nu)-\overline{W}_{\mathds{1}}(\mu,\nu)\right|+\frac{p}{n}
\end{align*}
because $|M_\tau|\leq p$. Apply Lemma~\ref{lmm:emp} with
\[
\mathcal{F}=\{f:(a,r)\mapsto \mathds{1}\{r>\mu a+\nu\}:\mu,\nu\in\mathbb{Q}\},\;\;\;\text{and }F\equiv 1.
\]
The subgraph of $\mathcal{F}$ also forms a VC set via similar analysis. Lemma~\ref{lmm:emp} implies that if $t\geq C_1/\sqrt{n}$ for large enough $C_1$
\[
\mathbb{P}\left\{\sup_{\mu,\nu}\left|W_{\mathds{1}}(\mu,\nu)-\overline{W}_{\mathds{1}}(\mu,\nu)\right|>t\right\}\leq 4\exp\left(-\left(\frac{\sqrt{n}t}{C_2}-1\right)^2\right)+\mathbb{P}\left\{2>\frac{C_1}{C_2}\right\}.
\]
The second term is 0 for $C_1>2C_2$, and the desired inequality~\eqref{eq:faithful} immediately follows.
\end{proof}

\begin{proof}[Proof of Theorem~\ref{thm:risk}]
Suppose $\mu_\tau^*, \nu_\tau^*\in \arg\min_{\mu,\nu\in\mathbb{R}}\mathcal{R}(\hat{q}_\tau+\mu A + \nu)$. There exists some finite constant $K$ for which
\[
(\mu_\tau^*, \nu_\tau^*)\in B_K=\{(\mu,\nu): \max (|\mu|,|\nu|)\leq K\}.
\]
Invoke Lemma~\ref{lmm:emp} with
\[
\mathcal{F}=\left\{f:(a,r)\mapsto \rho_\tau(r-\mu a-\nu): \mu,\nu\in\mathbb{Q}\right\}.
\]
The subgraph of $\mathcal{F}$ forms a VC class of sets, and on the compact set $B_{K}$, we have $|f|\leq F$ where $F(a,r)=|r|+K|a|+K$ has bounded second moment. By Lemma~\ref{lmm:emp},
\begin{equation}\label{eq:R.concentration}
\sup_{(\mu,\nu)\in B_{2K}}\left|\frac{1}{n}\sum_{i\leq n}\left(\rho_\tau(R_i-\mu A_i-\nu)-\mathbb{E}\rho_\tau (R_i-\mu A_i-\nu)\right)\right|=O_p(1/\sqrt{n}).
\end{equation}
Use continuity of $\rho_\tau$ to deduce existence of some $\delta>0$ for which
\[
\mathbb{E}\rho_\tau(R_1-\mu A_1 -\nu)>\mathbb{E}\rho_\tau(R_1-\mu^*_\tau A_1 -\nu^*_\tau)+2\delta\;\;\;\forall (\mu,\nu)\in\partial B_{2K}.
\]
Use~\eqref{eq:R.concentration} to deduce that with probability $1-o(1)$,
\[
\min_{(\mu,\nu)\in \partial B_{2K}}\frac{1}{n}\sum_{i\leq n}\rho_\tau (R_i-\mu A_i-\nu)>\mathbb{E}\rho_\tau(R_1-\mu^* A_1 -\nu^*)+\delta> \frac{1}{n}\sum_{i\leq n}\rho_\tau (R_i-\mu_\tau^* A_i-\nu_\tau^*).
\]
By convexity of $\rho_\tau$, the minimizers $\hat{\mu}_\tau, \hat{\nu}_\tau$ must appear with $B_{2K}$. Recall that $\hat{\mu}_\tau$ and $\hat{\nu}_\tau$ are obtained by running quantile regression of $R$ against $A$ on the training set, so we have
\begin{equation}\label{eq:basic}
\frac{1}{n}\sum_{i\leq n}\rho_\tau(R_i-\hat{\mu}_\tau A_i-\hat{\nu}_\tau)\leq \frac{1}{n}\sum_{i\leq n}\rho_\tau(R_i-\mu_\tau^* A_i-\nu_\tau^*).
\end{equation}
A few applications of the triangle inequality yields
\begin{align*}
& R(\tilde{q}_\tau)-R(\tilde{q}_\tau^*)\\
\leq & \frac{1}{n}\sum_{i\leq n}\rho_\tau(R_i-\hat{\mu}_\tau A_i-\hat{\nu}_\tau)
-\frac{1}{n}\sum_{i\leq n}\rho_\tau(R_i-\mu_\tau^* A_i-\nu_\tau^*)\\
& + 2\sup_{(\mu,\nu)\in B_{2K}}\left|\frac{1}{n}\sum_{i\leq n}\left(\rho_\tau(R_i-\mu A_i-\nu)-\mathbb{E}\rho_\tau (R_i-\mu A_i-\nu)\right)\right|\\
\leq & 0+O_p(1/\sqrt{n})
\end{align*}
by~\eqref{eq:basic} and~\eqref{eq:R.concentration} .
\end{proof}

\section*{Acknowledgment}
Research supported in part by ONR grant N00014-12-1-0762 and NSF grant DMS-1513594.

\bibliographystyle{apalike}
\bibliography{fairness}

%\end{document}
\newpage

\end{document}